\newcolumntype{Y}{>{\centering\arraybackslash}X}
\numberwithin{equation}{section}
\theoremstyle{plain}
\newtheorem{theorem}{Theorem}[section]
\newtheorem{lemma}[theorem]{Lemma}
\newtheorem{corollary}[theorem]{Corollary}
\newtheorem{proposition}[theorem]{Proposition}
\theoremstyle{definition}
\newtheorem{remark}[theorem]{Remark}
\newtheorem{example}[theorem]{Example}
\newtheorem{definition}[theorem]{Definition}
\newcommand{\R}{\mathbb{R}}
\newcommand{\C}{\mathbb{C}}
\newcommand{\N}{\mathbb{N}}
\newcommand{\F}{\mathbb{F}}
\renewcommand{\P}{\mathbb{R}\mathrm{P}}
\DeclareMathOperator{\Prob}{\mathbf P}
\title{Typical ranks of random order-three tensors}
\author[P.~Breiding]{Paul Breiding}
\address[Breiding]{University Osnabr\"uck,
FB 06 Mathematik/Informatik/Physik
Albrechtstr.~28a,
49076 Osnabr\"uck, Germany}
\email{pbreiding@uni-osnabrueck.de}
\author[S.~Eggleston]{Sarah Eggleston}
\address[Eggleston]{University Osnabr\"uck,
FB 06 Mathematik/Informatik/Physik
Albrechtstr.~28a,
49076 Osnabr\"uck, Germany}
\email{seggleston@uni-osnabrueck.de}
\author[A.~Rosana]{Andrea Rosana}
\address[Rosana]{Scuola Internazionale Superiore di Studi Avanzati (SISSA),
via Bonomea, 265,
34136 Trieste, Italy}
\email{arosana@sissa.it}
\begin{document}

\begin{abstract}
In this paper we study typical ranks of real $m\times n \times \ell$ tensors. In the case $ (m-1)(n-1)+1 \leq \ell \leq mn$ the typical ranks are contained in $\{\ell, \ell +1\}$, and $\ell$ is always a typical rank. We provide a geometric proof of this fact. We express the probabilities of these ranks in terms of the probabilities of the numbers of intersection points of a random linear space with the Segre variety.
In addition, we give some heuristics to understand how the probabilities of these ranks behave, based on  asymptotic results on the average number of real points in a random linear slice of a Segre variety with a subspace of complementary dimension.

The typical ranks of real $3\times 3\times 5$ tensors are $5$ and $6$. We link the rank probabilities of a $3\times 3 \times 5$ tensor with i.i.d.\ Gaussian entries to the probability of a random cubic surface in $\P^3$ having real lines. As a consequence, we get a bound on the expected number of real lines on such a surface.
\end{abstract}

\maketitle

\vspace{-0.6cm}
\section{Introduction}

The determination of tensor rank has broad applications in many fields, including mathematics, psychometrics, and data science \cite{Kolda2009}.
This paper focuses on \emph{typical ranks}.
A typical rank of a given format is any number~$r$ such that the set of tensors of rank $r$ is a full-dimensional semialgebraic set \cite[\S 7]{Friedland2012}.  While over the complex numbers (in fact, over any algebraically closed field) there is only one typical rank,  much less is known about typical ranks of real tensors.

Here, we focus on the family of real order-three tensors of the format $m\times n\times \ell$ where $2\leq m\leq n\leq \ell$. We take the perspective of probability and define typical ranks of tensors as follows.
\begin{definition}\label{def_typical}
We call $T=(t_{i,j,k})\in\mathbb R^{m\times n\times \ell}$ a \emph{Gaussian tensor} if its entries $t_{i,j,k}$ are i.i.d.\ standard Gaussian. An integer $r>0$ is called a \emph{typical rank} for the format $m\times n\times \ell$ if for a Gaussian tensor $T$ we have $\Prob\{\mathrm{rank}(T)=r\}>0$. 
\end{definition}
Our first main theorem is the following. The proof comes in \cref{sec:friedland_prob}.
\begin{theorem}\label{main0_intro}
Suppose that $(m-1)(n-1)+1 \leq \ell \leq mn$. Then, the typical ranks of real $m\times n\times \ell$ tensors are contained in $\{\ell, \ell+1\}$. Moreover, $\ell$ is always a typical rank.
\end{theorem}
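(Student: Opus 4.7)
The plan is to translate tensor rank into a geometric statement about real rank-one matrices and then use dimension counts against the Segre variety. Slicing $T$ along the third mode gives matrices $T_1,\dots,T_\ell\in\R^{m\times n}$, and with $L:=\mathrm{span}(T_1,\dots,T_\ell)$ a direct unfolding of the rank definition yields
\[
\rank(T)=\min\{r : L\subseteq \mathrm{span}(M_1,\dots,M_r)\text{ for some real rank-one }M_1,\dots,M_r\}.
\]
For a Gaussian $T$ the slices are almost surely linearly independent, so $\dim L=\ell$ and the minimum above is at least $\ell$; hence $\rank(T)\ge\ell$ a.s. To see $\ell$ is typical, I reverse the construction: take $\ell$ generic real rank-one matrices $M_1,\dots,M_\ell$, let $L_0:=\mathrm{span}(M_1,\dots,M_\ell)$, and let $T$ have any basis of $L_0$ as its slices. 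Then $\rank(T)=\ell$, and the parametrization $(u_i,v_i,w_i)\mapsto\sum_i u_i\otimes v_i\otimes w_i$ has effective domain dimension $\ell(m+n+\ell-2)\ge mn\ell$ precisely in the range of the theorem, so its image is a full-dimensional semialgebraic subset of $\R^{m\times n\times\ell}$ and $\Prob\{\rank(T)=\ell\}>0$.

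For the upper bound $\rank(T)\le \ell+1$ almost surely, I would show that for a generic $\ell$-dimensional $L$ one can always find $\ell+1$ real rank-one matrices whose span contains $L$. Pick a generic real rank-one matrix $M_0\notin L$ and set $V:=L+\R M_0$, of dimension $\ell+1$. The projectivization $\mathbb{P}(V)$ has dimension $\ell\ge(m-1)(n-1)$, which is the codimension of the Segre variety $\Sigma\subset\mathbb{P}^{mn-1}$ under the hypothesis on $\ell$. Hence the complex intersection $\mathbb{P}(V)\cap\Sigma_{\C}$ has expected dimension $\ge 1$, is defined over $\R$, and contains the real smooth point $[M_0]$; its real locus is therefore a non-empty smooth real submanifold of positive dimension passing through $[M_0]$. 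Provided these real points span $V$, one extracts $\ell+1$ real rank-one matrices $N_1,\dots,N_{\ell+1}$ with $L\subseteq V=\mathrm{span}(N_1,\dots,N_{\ell+1})$, giving $\rank(T)\le\ell+1$.

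The main obstacle is justifying that the real locus from the previous paragraph actually spans $V$: a priori, real points of a complex variety defined over $\R$ can lie in a proper real linear subspace, and in the boundary case $\ell=(m-1)(n-1)+1$ the intersection is only a real algebraic curve, where such a degeneracy is conceivable. I would attack this with a transversality/incidence argument on tuples $(M_0,N_1,\dots,N_{\ell+1})$ with $N_i\in(L+\R M_0)\cap\hat\Sigma_{\R}$, showing that the non-spanning locus projects to a lower-dimensional subset of the parameter space of $M_0$; equivalently, as $M_0$ varies, the $(\ell+1)$-planes $L+\R M_0$ sweep a Zariski-open family of $(\ell+1)$-planes through $L$, so the non-spanning case is non-generic. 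Once in place, this argument also suggests the probabilistic description promised by the abstract, expressing $\Prob\{\rank(T)=\ell\}$ in terms of how often the random linear section $\mathbb{P}(L)\cap\Sigma_{\R}$ already contains enough real points in general position to span $\mathbb{P}(L)$.
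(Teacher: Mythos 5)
Your overall strategy --- Friedland's slice characterization plus intersections of the slice span with the Segre variety --- is exactly the paper's, and your lower bound $\rank(T)\ge\ell$ a.s.\ is correct. But two of your three steps have genuine gaps. First, your argument that $\ell$ is a typical rank rests on the parameter count $\ell(m+n+\ell-2)\ge mn\ell$, and the inference ``expected dimension $\ge$ ambient dimension, hence the image is full-dimensional'' is invalid: secant varieties of Segre products can be defective. For instance, for $3\times3\times3$ tensors and $r=4$ the same count gives $4\cdot 7=28>27$, yet the rank-$\le 4$ locus is a hypersurface (the generic rank is $5$). Your count is not even strict in the hardest case $\ell=(m-1)(n-1)+1$, where it gives exact equality, so one would additionally need generic finiteness of the parametrization. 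Moreover, even granting complex non-defectivity, positivity of the \emph{real} probability requires exhibiting a real point where the parametrization is a submersion. The paper closes this by an explicit construction: it cuts $\Sigma_{m,n}$ with the real linear space $\{M: a_i^TMb_i=0,\ i=1,\dots,m+n-2\}$ for generic real $a_i,b_i$, where the equations factor as $(a_i^Tx)(b_i^Ty)=0$ and one counts $\binom{m+n-2}{m-1}\ge\ell$ regular \emph{real} solutions, which persist under perturbation.

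Second, you correctly identify the spanning of $V=L+\R M_0$ by real points of $\P(V)\cap\Sigma$ as ``the main obstacle,'' but your proposed transversality argument is only a sketch, and as phrased (``the planes $L+\R M_0$ sweep a Zariski-open family of planes \emph{through $L$}'') it establishes the wrong kind of genericity: you need $V$ to be generic in the full Grassmannian $G(\ell+1,mn)$, not merely among planes containing the fixed $L$, and this uses that $L$ itself is uniformly random together with dominance of the map $(L,p)\mapsto L+\langle p\rangle$. Once $V$ is generic, the paper's mechanism is: by Bertini the complex linear section $\P(V)^{\C}\cap\Sigma^{\C}$ is smooth, so the nonempty real locus consists of smooth points and is Zariski dense in the positive-dimensional complex intersection; by the General Position Theorem for linear sections, $\ell+1$ general points of that intersection are linearly independent, hence one can choose them real and spanning $V$. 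Your worry that the real locus of a curve defined over $\R$ could lie in a proper real subspace is legitimate, and it is precisely this Zariski-density-plus-general-position argument that rules it out; without it your upper bound $\rank(T)\le\ell+1$ is not established.
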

In \cite[Theorem 8.1]{SMS17} the authors give an algebraic proof of \cref{main0_intro} assuming $3\leq m$. Our proof, on the other hand, is geometric and is based on the \emph{General Position Theorem} for general linear sections of algebraic varieties (see \cref{prop_linear_sections}). 

The lower bound of $\ell=(m-1)(n-1)+1$ in \cref{main1_intro} is the codimension of the variety of rank-one matrices in $\mathbb R^{m\times n}$, called the \emph{Segre variety}. This is a real projective variety that we denote by $\Sigma_{m,n}\subset \P^{mn-1} $.
Our next theorem links rank probabilities to the probabilities of having a certain number of intersections points of $\Sigma_{m,n}$ with a \emph{uniform subspace $L\subset \P^{mn-1}$}. This random subspace is the projectivization of a uniform random variable in the Grassmannian $G(\ell,mn)$ of $\ell$-dimensional linear spaces in $\mathbb R^{m\times n}$ (see \cref{sec:random_subspaces} for more details).
We give a proof of the theorem in \cref{sec:friedland_prob}.

\begin{theorem}\label{main1_intro}
Let $T \in \R^{m\times n \times \ell}$.
If $\ell = (m-1)(n-1)+1$, we have 
$$\underset{T \in\mathbb R^{m\times n\times \ell} \atop \mathrm{ Gaussian}}{\Prob}\{\mathrm{rank}(T) = \ell\} =
\underset{L\in G(\ell, mn) \atop \mathrm{ uniform}}{\Prob} \{\#(L \cap \Sigma_{m,n})\geq \ell \}.$$
For $(m-1)(n-1)+1 < \ell \leq mn$ we have 
$$\underset{T \in\mathbb R^{m\times n\times \ell} \atop \mathrm{ Gaussian}}{\Prob}\{\mathrm{rank}(T) = \ell\} =
\underset{L\in G(\ell, mn) \atop \mathrm{ uniform}}{\Prob} \{L \cap \Sigma_{m,n} \neq \emptyset\}.$$
\end{theorem}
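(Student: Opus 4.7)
The strategy is to view $T = (t_{i,j,k})$ through its $\ell$-tuple of slices $T_k \in \R^{m\times n}$, defined by $(T_k)_{i,j} = t_{i,j,k}$, and to translate the rank condition on $T$ into a geometric condition on the subspace $L := \mathrm{span}(T_1,\dots,T_\ell)\subset \R^{m\times n}$. The plan rests on three ingredients: identifying the distribution of $L$; reformulating tensor rank in terms of rank-one matrices in $L$; and applying the General Position Theorem (\cref{prop_linear_sections}) to the intersection $L\cap \Sigma_{m,n}$.

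First, since the entries of $T$ are i.i.d.\ standard Gaussian, the slices $T_1,\dots,T_\ell$ are i.i.d.\ Gaussian matrices, hence almost surely linearly independent, and their joint distribution is invariant under the orthogonal action of $\mathrm{O}(mn)$ on $\R^{m\times n}$. Uniqueness of the $\mathrm{O}(mn)$-invariant probability measure on $G(\ell,mn)$ forces $L$ to be uniformly distributed. Next, a decomposition $T = \sum_{s=1}^r u_s\otimes v_s \otimes w_s$ is the same as writing each slice $T_k = \sum_s (w_s)_k\, u_s v_s^\top$ as a linear combination of the rank-one matrices $u_s v_s^\top$, so $\mathrm{rank}(T)\le r$ if and only if $L$ is contained in the span of $r$ rank-one matrices. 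Combined with \cref{main0_intro} (the real rank lies in $\{\ell,\ell+1\}$ almost surely) and with $\dim L=\ell$, this yields
\[
\mathrm{rank}(T) = \ell \iff L \text{ contains } \ell \text{ linearly independent real rank-one matrices,}
\]
almost surely, which already recasts the event $\{\mathrm{rank}(T)=\ell\}$ as a statement about $L\cap \Sigma_{m,n}$.

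I would then split on the range of $\ell$ and invoke \cref{prop_linear_sections}. In the boundary case $\ell = (m-1)(n-1)+1$, the projectivization of $L$ has dimension equal to the codimension of $\Sigma_{m,n}$, so the complex intersection $L\cap \Sigma_{m,n,\C}$ is $0$-dimensional and, for generic $L$, its points sit in linearly general position in $L_{\C}$; in particular, any $\ell$ of them are independent. Thus $L$ contains $\ell$ independent real rank-one matrices if and only if $\#(L\cap \Sigma_{m,n})\ge \ell$, which is the first equality. When $\ell > (m-1)(n-1)+1$, the complex intersection has positive expected dimension, and for generic $L$ it is irreducible, smooth and non-degenerate in $L_{\C}$. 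One direction is then immediate: if $L$ carries $\ell$ independent real rank-one matrices, the real intersection is non-empty.

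The main obstacle is the converse in the positive-dimensional case: starting from a single real point of $L\cap \Sigma_{m,n}$, one must produce enough real points to span $L$. I would handle this via the real-algebraic fact that a smooth complex variety possessing a smooth real point has Zariski-dense real locus; combined with the non-degeneracy provided by the General Position Theorem, the real points of $L\cap \Sigma_{m,n,\C}$ already span $L_{\C}$, yielding $\ell$ independent real rank-one matrices in $L$ and closing the second equality.
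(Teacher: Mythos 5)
Your proposal is correct and follows essentially the same route as the paper: it identifies the span of the slices as a uniform point of $G(\ell,mn)$, translates rank via Friedland's slice characterization into the existence of $\ell$ independent real rank-one matrices in $L$, and then handles the two ranges of $\ell$ exactly as in \cref{prop_linear_sections} — general position of the finitely many intersection points in the boundary case, and Bertini plus Zariski-density of the real locus of the smooth positive-dimensional section in the other. No substantive differences from the paper's argument.
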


In particular, the second part of this theorem proves that, when $(m-1)(n-1)+1 < \ell$ and the degree of the \emph{complex} Segre variety is odd, then we have only one typical rank~$\ell$. Recall from \cite[Example 18.15]{harris} that the degree of the complex Segre variety is~$\binom{m+n-2}{m-1}$. By Lucas' theorem \cite{lucas}, this number is odd if and only if the binary expansions $m+n-2=\sum_{i\geq 0} a_i2^i$ and $m-1=\sum_{i\geq 0} b_i2^i$ satisfy $(\forall i: b_i=1\Rightarrow a_i=1)$. 

A currently open problem is which formats of real tensors have multiple typical ranks. The cases of $(m-1)(n-1)+1 < \ell \leq mn$ were addressed by \cite[Theorem 1.1]{SMS17}, where the authors showed that multiple typical ranks occur if and only if there exists a nonsingular bilinear map $\mathbb{R}^m \times \mathbb{R}^n \to \mathbb{R}^{mn-\ell}$. In their paper, such a bilinear map $\varphi$ is nonsingular if $\varphi(x,y) = 0$ implies that $x=0$ or $y=0$. This is equivalent to the existence of a linear space $L\in G(\ell, mn)$ such that $L \cap \Sigma_{m,n} =\emptyset$. If $L \cap \Sigma_{m,n} =\emptyset$, then all linear spaces in a neighborhood of $L$ also do not intersect $\Sigma_{m,n}$. Hence, \cite[Theorem 1.1]{SMS17} also follows from the second part of \cref{main1_intro}.

Our proof of \cref{main1_intro} relies on a result by Friedland, which we formulate geometrically in terms of linear sections of the Segre variety. Using this approach, in \cref{sec:tall_tensors} we moreover provide a proof of the fact that when $(m-1)n < \ell \leq mn$ there is always a unique typical rank. This was shown previously through algebraic means by~\cite{tenBerge2000,SSM16}.

Let us now turn our focus to  rank probabilities. In the case $(m,n,\ell)=(2,2,2)$, Bergqvist \cite{Bergqvist2013} showed that for $T\in\mathbb R^{2\times 2\times 2}$ Gaussian one has $\Prob\{\mathrm{rank}(T)=2\} = \tfrac{\pi}{4}$ and $\Prob\{\mathrm{rank}(T)=3\} = 1-\tfrac{\pi}{4}$. They also computed the rank probabilities for tensors of the format $(m,n,\ell)=(2,3,3)$. Later, Bergqvist and Forrester \cite{BF2011} computed the rank probabilities of $2\times n\times n$ Gaussian tensors for any $n$. Beyond that, we are not aware of any results on rank probabilities of Gaussian tensors.

Using a result by \cite{CAET2023} we can link the rank probabilities of Gaussian  $3\times 3\times 5$-tensors to the probabilities of a random cubic surface having real lines.
Recall that a smooth real cubic surface contains $27$ complex lines, and that either $3,7,15$ or $27$ of them are real lines. We have the following result.
\begin{theorem}\label{thm:rank-reallines_intro}
Let $T\in\mathbb R^{3\times 3\times 5}$ be a Gaussian tensor, and let $M_0,\dots,M_3 \in \R^{3\times 3}$ be four independent Gaussian matrices. Define a random cubic surface
$$S := \{ [z_0:\dots:z_3] \in \P^3 \mid \det(z_0 M_0 + \dots + z_3 M_3) = 0 \}.$$
Then the following probabilities are equal
$$\Prob\{\mathrm{rank}(T) = 5\}= \Prob\{\text{the 27 lines on }S\text{ are all real}\}.$$
\end{theorem}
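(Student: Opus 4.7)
The plan is to apply \cref{main1_intro} to the case $(m,n,\ell)=(3,3,5)$. Since $(m-1)(n-1)+1 = 5 = \ell$, the first part of that theorem gives
\[
\Prob\{\mathrm{rank}(T) = 5\} = \Prob\{\#(L \cap \Sigma_{3,3}) \geq 5\},
\]
for $L$ uniform in $G(5, 9)$. The complex Segre variety $\Sigma_{3,3}$ has degree $\binom{4}{2} = 6$, so for almost every $L$ the intersection has exactly $6$ points over $\C$; since non-real points occur in complex-conjugate pairs, the real count lies in $\{0, 2, 4, 6\}$. Hence, almost surely, $\{\#(L\cap\Sigma_{3,3})\ge 5\}$ coincides with the event $\{\#(L\cap\Sigma_{3,3})=6\}$, i.e., that all six complex intersection points are real.

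Next, I would realize the uniform $L$ as the Frobenius-orthogonal complement of a random $4$-plane of matrices. Set $V := \mathrm{span}(M_0,\ldots,M_3)\subset\R^{3\times 3}$: by orthogonal invariance of the Gaussian distribution on $\R^{9}$, $V$ is a.s.\ $4$-dimensional and uniformly distributed on $G(4,9)$, so $L := V^\perp$ is uniformly distributed on $G(5,9)$, and we may substitute this $L$ in the identity above. A rank-one matrix $ab^\top$ lies in $L$ exactly when $\langle M_i, ab^\top\rangle_F = a^\top M_i b = 0$ for every $i$. Consequently, $L\cap \Sigma_{3,3}$ is the common zero locus in $\P^2\times\P^2$ of the four bilinear forms $B_i(a,b):=a^\top M_i b$, and the probability we seek is the probability that all six complex zeros of this bilinear system are real.

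The last step is to invoke the result of \cite{CAET2023} that links the zeros of the $B_i$ to the $27$ lines on $S$. Classically, each common zero $(a,b)$ determines two lines on $S$, namely the loci $\{z\in\P^3:M(z)b=0\}$ and $\{z\in\P^3:a^\top M(z)=0\}$, and the resulting $12$ lines form a Schl\"afli double-six. A standard configuration-theoretic fact about smooth cubic surfaces then asserts that the $27$ complex lines on $S$ are all real precisely when the $12$ lines of any inscribed double-six are real, which is the case iff the six pairs $(a,b)$ are real. Chaining the three equivalences yields the theorem. The principal obstacle will be applying \cite{CAET2023} cleanly: one must check that for a.e.\ choice of Gaussian $M_i$ the cubic $S$ is smooth, the $6$ common zeros are distinct, and the $12$ associated lines do form a (real) double-six, after which the remaining $15$ lines are automatically real by Schl\"afli's theorem.
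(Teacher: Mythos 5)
Your proposal is correct in outline and follows the paper up to the last step: both reduce, via \cref{main1_intro} and the parity of the real intersection count, to showing that $\Prob\{\#(L\cap\Sigma_{3,3})=6\}$ equals the probability that all $27$ lines are real, with $L$ realized as $\langle M_0,\dots,M_3\rangle^\perp$ exactly as in the paper (\cref{lem_OC}, \cref{lem_uniform}). Where you diverge is in how the six intersection points are tied to the lines. The paper states the \cite{CAET2023} correspondence in its blow-up form (\cref{thm:CAET}): $S$ is the blow-up of $\P^2$ at $x_1,\dots,x_6$ and also at $y_1,\dots,y_6$; it then invokes the Polo-Blanco--Top classification (\cref{prop_PBT}) of real line counts in terms of how many of the six blown-up points are real, and converts the resulting inequalities $p_i\le q_j$ into equalities by a sandwiching argument using $\sum p_i=\sum q_j=1$. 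You instead work with the double-six of lines $\{M(z)b_i=0\}$, $\{a_i^\top M(z)=0\}$ attached to the six points and argue that realness of the points, of the double-six, and of all $27$ lines are equivalent. This is a legitimate and arguably more self-contained route for the single equality $p_6=q_{27}$ needed here, but it carries two small obligations you only gesture at: (i) that a wholly real double-six forces the remaining $15$ lines to be real (each $c_{ij}$ is the residual cubic component in the real plane spanned by the incident real lines $a_i$, $b_j$), and (ii) that a non-real intersection point cannot yield a real line, which requires that distinct points give distinct lines so that complex conjugation permutes the twelve lines compatibly with its permutation of the six points. The paper's counting argument sidesteps both issues and, as a bonus, yields all four equalities $p_0=q_3$, $p_2=q_7$, $p_4=q_{15}$, $p_6=q_{27}$ of \cref{cor_cubic_lines}, which are later used to bound the expected number of real lines; your version would need the $c_{ij}$-counting refinement to recover those.
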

The proof of this theorem comes in \cref{sec:3x3x5}. The proof is based on \cref{cor_cubic_lines}, in which we relate the numbers of real lines to random linear sections of the Segre variety. As a consequence, we can prove that the expected number of lines of the cubic in \cref{thm:rank-reallines_intro} is at least $11$ and at most $15$; see \cref{cor_E}.

We close this paper in \cref{sec:heur} with a study of the asymptotics of Gaussian tensors of the format $m\times n\times (m-1)(n-1)+1$. We argue that for $m\geq 4$, when $n \to \infty$ one should expect the probability of the generic rank (the smallest typical rank) to go to 1. 

\subsection*{Acknowledgements}
P.~Breiding is funded by the Deutsche Forschungsgemeinschaft (DFG) – Projektnummer 445466444.

\bigskip
\section{Preliminaries on Typical Ranks}

In the following, we will always take $m,n,\ell \in \N$ such that $2 \leq m \leq n \leq \ell$.

Recall that the rank of an $m\times n\times \ell$ tensor $T$ over a field $\F$ is the minimum number of rank-one tensors that sum to $T$; i.e., the minimum number $r$ such that there exist $\{a_i\}_{i=1}^r \subset \F^m, \{b_i\}_{i=1}^r \subset \F^n$ and $\{c_i\}_{i=1}^r \subset \F^\ell$ with
$$T = \sum_{i=1}^r a_i \otimes b_i \otimes c_i .$$
For $\F=\C$ and fixed $m,n,\ell$, general tensors all have the same rank -- the generic rank of that format. Over $\R$, however, the story is different. Recall from \cref{def_typical} that we refer to all $r\in\N$ that arise with positive probability as ranks of a Gaussian tensor of a given format as typical ranks of that format. 

For many choices of tuples $(m,n,\ell)$, there is only one typical rank (equal to the generic rank of complex tensors of the given format). For instance, $3\times 3\times 3$ and $3\times 3\times 4$ tensors have only one typical rank, namely $5$ \cite{tBS2006,AOP09}. 
On the other hand, for some families of tensors there are multiple typical ranks. For example, tensors in~$\R^{2\times n \times \ell}$ with $2 \leq n \leq \ell$ have rank equal to $\min\{2n,\ell\}$ with probability one for all $n\neq \ell$, while for $n = \ell$, both $\ell$ and $\ell+1$ arise with positive probability \cite{tenBerge-Kiers99}. Known typical ranks of small order-three tensors are summarized in \cref{table:tB1}.

\begin{table}[ht]
\centering
\begin{tabularx}{\textwidth}{*{9}{Y}}
 \hline
  & \multicolumn{3}{c}{$m=2$} &&& \multicolumn{3}{c}{$m=3$} \\ [0.5ex] 
 \hline
  & $n = 2$ & $n = 3$ & $n = 4$ &&& $n = 3$ & $n = 4$ & $n = 5$ \\ 
 $\ell=2$ & \{2,3\} & 3 & 4 && $\ell=5$ & \{5,6\} & ? & ? \\
 $\ell=3$ & 3 & \{3,4\} & 4 && $\ell=6$ & 6 & ? & ? \\
 $\ell=4$ & 4 & 4 & \{4,5\} && $\ell=7$ & 7 & \{7,8\} & ? \\
 $\ell=5$ & 4 & 5 & 5 && $\ell=8$ & 8 & \{8,9\} & ? \\ 
 $\ell=6$ & 4 & 6 & 6 && $\ell=9$ & 9 & 9 & \{9,10\} \\ 
 $\ell=7$ & 4 & 6 & 7 && $\ell=10$ & 9 & 10 & 10 \\
 $\ell=8$ & 4 & 6 & 8 && $\ell=11$ & 9 & 11 & 11 \\ 
 $\ell=9$ & 4 & 6 & 8 && $\ell=12$ & 9 & 12 & 12 \\ [1ex] 
 \hline
\end{tabularx}
\caption{Typical ranks of real tensors of the format $m\times n\times \ell$, based on  the results \cite{tenBerge2004,SSM16,SMS21}. The symbol ``?'' indicates that the typical ranks of that format are not known.}
\label{table:tB1}
\end{table}

While purely algebraic approaches are common in determining typical ranks of tensors, tensors can also be viewed as geometric objects, and the rank is, correspondingly, a geometric property. The following theorem by \cite{Friedland2012} will be instrumental in this paper, as we follow a geometric approach to explore order-three tensor formats with multiple typical ranks. Angle brackets $\langle v_1,\dots,v_k\rangle$ denote the linear span of a set of vectors~$\{v_1,\dots,v_k\}$. 
\begin{theorem}[{\cite[Theorem 2.4]{Friedland2012}}]
    Fix $T \in \R^{m\times n\times \ell}$ and let $T_1, \dots, T_{\ell} \in \R^{m\times n}$ be the slices of $T$. Then the rank $r$ of $T$ is the minimal number of rank-one matrices $A_1,\dots,A_r \in \R^{m\times n}$ such that $\langle T_1,\dots,T_{\ell} \rangle \subseteq \langle A_1,\dots,A_r \rangle$.
\label{thm:Fri2.4}
\end{theorem}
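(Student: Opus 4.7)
The plan is to establish the equality by proving the two inequalities separately. Write $r := \rank(T)$, and let $s$ denote the minimal number of rank-one matrices whose span contains $\langle T_1, \dots, T_\ell \rangle$. The goal is to show $s = r$, and both directions follow by translating between a rank decomposition of $T$ and a spanning set of rank-one matrices for the slice space.

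First I would show $s \leq r$. Starting from a rank decomposition $T = \sum_{i=1}^r a_i \otimes b_i \otimes c_i$ with $a_i \in \R^m$, $b_i \in \R^n$, $c_i = (c_{i,1}, \dots, c_{i,\ell}) \in \R^\ell$, I would read off the $k$-th slice as $T_k = \sum_{i=1}^r c_{i,k}\, a_i b_i^T$. This exhibits every slice as a linear combination of the rank-one matrices $A_i := a_i b_i^T$, so $\langle T_1,\dots,T_\ell\rangle \subseteq \langle A_1,\dots,A_r\rangle$ and hence $s \leq r$.

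Conversely, I would show $r \leq s$. Given any inclusion $\langle T_1, \dots, T_\ell \rangle \subseteq \langle A_1, \dots, A_s\rangle$ with each $A_i = a_i b_i^T$ of rank one, each slice admits coefficients $\lambda_{i,k}$ so that $T_k = \sum_{i=1}^s \lambda_{i,k}\, a_i b_i^T$. Collecting these coefficients into vectors $c_i := (\lambda_{i,1}, \dots, \lambda_{i,\ell}) \in \R^\ell$, a direct reassembly of the slices along the third mode gives $T = \sum_{i=1}^s a_i \otimes b_i \otimes c_i$, proving $\rank(T) \leq s$.

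The main obstacle is quite modest: the whole argument is essentially a translation between the two data structures (slicewise expansion versus an outer-product decomposition), and the only care required is to fix conventions for which mode is sliced and verify that the reassembly step genuinely recovers $T$. There is no subtlety in the minimality claim, since $s$ is defined to be the minimum on its side and the two inequalities pin down $r = s$ directly.
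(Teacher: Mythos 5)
Your proposal is correct and follows essentially the same route as the paper: both directions are the same translation between a rank decomposition $T=\sum_i a_i\otimes b_i\otimes c_i$ and the slicewise expansion $T_k=\sum_i c_{i,k}\,a_ib_i^T$, which the paper phrases via $T=\sum_{k=1}^\ell T_k\otimes e_k$ and multilinearity. No substantive difference.
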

For completeness, we give a short proof of this result.
\begin{proof}[Proof of Theorem \ref{thm:Fri2.4}]
We have $T=\sum_{i=1}^\ell T_i\otimes e_i
$, where $e_i\in\mathbb R^\ell$ is the $i$-th standard basis vector. On the one hand, if $T=\sum_{j=1}^r a_j\otimes b_j\otimes c_j$, write $c_j=\sum_{i=1}^\ell \lambda_{i,j}e_i$. Then, using multilinearity we have $T=\sum_{i=1}^\ell (\sum_{j=1}^r \lambda_{i,j} a_j\otimes b_j)\otimes e_i$. Therefore every $T_i$ is contained in $\langle a_1\otimes b_1,\dots,a_r\otimes b_r\rangle$. Conversely, if each $T_i$ is contained in $\langle a_1\otimes b_1,\dots,a_r\otimes b_r\rangle$, then there exists $\lambda_{i,j}$, such that $T=\sum_{i=1}^\ell (\sum_{j=1}^r \lambda_{i,j} a_j\otimes b_j)\otimes e_i = \sum_{j=1}^r a_j\otimes b_j \otimes (\sum_{i=1}^\ell \lambda_{i,j}e_i)$. That is, $T$ has rank at most $r$. 
\end{proof}

\bigskip
\section{Geometric perspective on typical ranks}\label{sec:generalization}
In \cref{thm:Fri2.4}, we seek to determine the minimal number of rank-one matrices that are needed to span the linear subspace $\langle T_1,\dots,T_{\ell} \rangle$. Here, we can consider the projectivization of that linear space because the rank of a matrix is invariant under non-zero scaling. Hence, we introduce the following notation.

The projective space over $\mathbb R^{m\times n}$ is $\P^{mn-1}$. Denote by
$$\Sigma_{m,n} := \{M\in\P^{mn-1} \mid \mathrm{rank}(M)=1\}.$$
This is a smooth algebraic variety in $\P^{mn-1}$, called \emph{Segre variety}, defined by the vanishing of $2\times 2$ minors of $M$. Its dimension is
$$\dim \Sigma_{m,n} = m+n-2.$$
The Segre variety is the image of the Segre embedding 
$\P^{m-1} \times \P^{n-1} \hookrightarrow \P^{mn-1}$, given by $(x,y)\mapsto x\otimes y.$
The complex Zariski closure of $\Sigma_{m,n}$ is the complex Segre variety $\Sigma_{m,n}^{\mathbb C} := \{M\in\mathbb C\mathrm P^{mn-1} \mid \mathrm{rank}(M)=1\}$. It is smooth and irreducible. By \cite[Example 18.15]{harris}, its degree is
\begin{equation}\label{degree_segre}
\mathrm{deg}(\Sigma_{m,n}^{\mathbb C}) = \binom{m+n-2}{m-1}.
\end{equation}
The latter means that a general linear subspace $L\subset \mathbb C\mathrm P^{mn-1}$ of codimension $m+n-2$ will intersect $\Sigma_{m,n}^{\mathbb C}$ in $\binom{m+n-2}{m-1}$ many complex points. Note that such an $L$ corresponds to an $((m-1)(n-1)+1)$-dimensional linear space in $\mathbb C^{m\times n}$. The word ``general'' here means that this holds for all $L$ outside some lower dimensional semialgebraic set.

\subsection{Random Linear Subspaces}\label{sec:random_subspaces}
For $1\leq d\leq mn$ we consider the \emph{Grassmannian}
$$G(d, mn) := \{L\subset \R^{m\times n}\mid L\text{ is a linear space of dimension } d\}.$$
The Grassmannian is a homogeneous space under the action of the orthogonal group $O(mn)$. There is a unique probability distribution on $G(d,mn)$ that is invariant under the action of $O(mn)$ on $G(d,mn)$ (see, e.g., \cite[Theorem 2.49 and Corollary 2.51]{folland}), which we call the \emph{uniform distribution}. 

The Euclidean inner product on $\mathbb R^{m\times n}$ is 
\begin{equation}\label{def_IP}
\langle A,B\rangle := \mathrm{Trace}(A^TB).
\end{equation}
For a subspace $L\in G(d,mn)$ we denote the \emph{orthogonal complement} by
\begin{equation}\label{def_OC}
L^\perp := \{A\in\mathbb R^{mn} \mid \langle A,B\rangle = 0 \text{ for all } B\in L\}.
\end{equation}

\begin{lemma}\label{lem_OC}
$L\in G(d,mn)$ is uniform if and only if $L^\perp\in G(mn-d,mn)$ is uniform.
\end{lemma}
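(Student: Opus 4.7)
The proof plan is to exploit the fact that the orthogonal-complement map
$\perp \colon G(d, mn) \to G(mn-d, mn)$, $L \mapsto L^\perp$, is an $O(mn)$-equivariant bijection, and then invoke the uniqueness of the $O(mn)$-invariant probability distribution stated in \cite{folland} (cited just above the lemma).

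First I would observe that $\perp$ is a bijection: it is its own inverse, since $(L^\perp)^\perp = L$ for every linear subspace $L \subseteq \R^{m\times n}$. Next, I would check equivariance, which is the only substantive computation. For any $Q \in O(mn)$ (acting on $\R^{m\times n} \cong \R^{mn}$) and any $L \in G(d, mn)$, I claim $(Q L)^\perp = Q(L^\perp)$. Indeed, $A \in (QL)^\perp$ iff $\langle A, QB\rangle = 0$ for every $B \in L$, and since $Q$ preserves the inner product \eqref{def_IP} one has $\langle A, QB\rangle = \langle Q^T A, B\rangle$, so this is equivalent to $Q^T A \in L^\perp$, i.e.\ $A \in Q(L^\perp)$.

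Given equivariance, the rest is essentially automatic. Suppose $L \in G(d, mn)$ is uniform, meaning its distribution is $O(mn)$-invariant. For any $Q \in O(mn)$, the random subspace $Q(L^\perp) = (QL)^\perp$ has the same distribution as $L^\perp$ because $QL$ has the same distribution as $L$ and $\perp$ is a deterministic map. Hence the distribution of $L^\perp$ on $G(mn-d, mn)$ is $O(mn)$-invariant, and the uniqueness statement from \cite[Theorem 2.49 and Corollary 2.51]{folland} forces it to be the uniform distribution. The converse is obtained by applying the same argument to $L^\perp$ in place of $L$ and using $(L^\perp)^\perp = L$.

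The statement is elementary, so there is no real obstacle; the only point that requires any verification is the equivariance $(QL)^\perp = Q(L^\perp)$, and this is immediate from the fact that the orthogonal group is by definition the isometry group of the inner product \eqref{def_IP}.
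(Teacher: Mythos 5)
Your proof is correct and follows the same route as the paper: both arguments reduce the claim to the $O(mn)$-equivariance of the orthogonal-complement map together with the uniqueness of the invariant probability distribution on the Grassmannian. The only difference is that you spell out the verification of $(QL)^\perp = Q(L^\perp)$, which the paper's proof asserts implicitly.
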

\begin{proof}
$L\in G(d,mn)$ is $O(mn)$-invariant, if and only if $L^\perp\in G(mn-d,mn)$ is $O(mn)$-invariant. The result follows because there is a unique probability distribution on $G(d,mn)$ that is invariant under the action of $O(mn)$.
\end{proof}

Next, we study when uniform subspaces are spanned by rank-one matrices.
\begin{proposition}\label{prop_linear_sections}
Let $0\leq d\leq mn-1$ and $L\subset \P^{mn-1}$ correspond to a uniform linear subspace in $G(d+1,mn)$. 
With probability one the following hold.
\begin{enumerate}
    \item In the case $d < (m-1)(n-1)$: $L$ is not spanned by rank-one matrices 
    \item In the case $d = (m-1)(n-1)$: $\#(L\cap \Sigma_{m,n})\geq d+1$ implies that $L$ is spanned by rank-one matrices.
    \item In the case $d >(m-1)(n-1)$: $L\cap \Sigma_{m,n}\neq \emptyset$ implies that $L$ is spanned by rank-one matrices.
\end{enumerate}
In particular, for any $d$ we have that, with probability one, $\#(L\cap \Sigma_{m,n})\geq d+1$ implies that~$L$ is spanned by rank-one matrices.
\end{proposition}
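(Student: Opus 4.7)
The plan is to split the argument according to how $d$ compares with the codimension $(m-1)(n-1)$ of $\Sigma_{m,n}^{\mathbb C}$ in $\mathbb C\mathrm P^{mn-1}$. In every regime I exploit that the real Grassmannian $G(d+1,mn)$ is Zariski dense in its complexification, so any non-empty complex Zariski-open condition transports to a property that holds for the uniform real $L$ with probability one.

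Cases (1) and (2) are quick. For (1), $d<(m-1)(n-1)$, Kleiman--Bertini for linear sections applied to $\Sigma_{m,n}^{\mathbb C}$ gives $L^{\mathbb C}\cap\Sigma_{m,n}^{\mathbb C}=\emptyset$ almost surely; hence $L\cap\Sigma_{m,n}=\emptyset$, so $L$ contains no rank-one matrix and a fortiori is not spanned by such. For (2), $d=(m-1)(n-1)$, I invoke the General Position Theorem (see, e.g., \cite{harris}): since $\Sigma_{m,n}^{\mathbb C}$ is smooth, irreducible and linearly non-degenerate in $\mathbb C\mathrm P^{mn-1}$, a general linear subspace of complementary projective dimension $d$ meets $\Sigma_{m,n}^{\mathbb C}$ transversally in $\binom{m+n-2}{m-1}$ points lying in linearly general position inside $L^{\mathbb C}$. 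Any $d+1$ of those points are then $\mathbb C$-linearly independent; if $d+1$ of them are real they are also $\mathbb R$-linearly independent and thus span $L$, exhibiting $L$ as spanned by rank-one matrices.

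Case (3), $d>(m-1)(n-1)$, is the delicate one. Bertini applied to the smooth irreducible $\Sigma_{m,n}^{\mathbb C}$ shows that for almost every $L$ the intersection $L^{\mathbb C}\cap\Sigma_{m,n}^{\mathbb C}$ is an irreducible smooth variety of positive dimension $d-(m-1)(n-1)$, and the standard fact that a general linear section of a linearly non-degenerate variety spans its slicing subspace yields that $L^{\mathbb C}\cap\Sigma_{m,n}^{\mathbb C}$ spans $L^{\mathbb C}$. Now assume $L\cap\Sigma_{m,n}\neq\emptyset$ and pick a real point $p$ of the intersection. Smoothness of $L^{\mathbb C}\cap\Sigma_{m,n}^{\mathbb C}$ at $p$ together with the implicit function theorem turns $L\cap\Sigma_{m,n}$ into a real-analytic manifold of the expected positive dimension near $p$; by the identity principle this manifold is Zariski dense in the irreducible complex variety $L^{\mathbb C}\cap\Sigma_{m,n}^{\mathbb C}$. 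Consequently the $\mathbb C$-linear span of the real intersection points equals the $\mathbb C$-linear span of $L^{\mathbb C}\cap\Sigma_{m,n}^{\mathbb C}$, which is $L^{\mathbb C}$; this forces the $\mathbb R$-linear span of those real points to equal $L$, so $L$ is spanned by rank-one matrices.

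The ``in particular'' statement then follows by inspection: under (1) the hypothesis $\#(L\cap\Sigma_{m,n})\geq d+1\geq 1$ fails almost surely so the implication is vacuous; (2) is exactly the statement; and under (3) the hypothesis forces $L\cap\Sigma_{m,n}\neq\emptyset$, to which case (3) applies. I anticipate the smoothness/transversality step in case (3) to be the main obstacle: one must ensure that for almost every uniform real $L$, the complex intersection $L^{\mathbb C}\cap\Sigma_{m,n}^{\mathbb C}$ is simultaneously smooth at every point (so in particular at whatever real point $p$ happens to lie there) and linearly non-degenerate in $L^{\mathbb C}$. Both are classical Bertini-type statements over $\mathbb C$, but the real verification requires carefully invoking the Zariski-density of the real Grassmannian in its complexification noted at the outset, so that the measure-zero exceptional locus on the complex side pulls back to a measure-zero locus on the real side.
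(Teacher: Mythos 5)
Your proof is correct and follows essentially the same route as the paper's: emptiness of the intersection for small $d$, the General Position Theorem for $d=(m-1)(n-1)$, and for $d>(m-1)(n-1)$ smoothness of the generic linear section plus Zariski-density of the real points near a smooth real point to produce $d+1$ spanning points. Your case (3) is in fact spelled out slightly more carefully than the paper's (explicit use of the implicit function theorem and irreducibility to justify the density claim), but the underlying argument is identical.
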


\begin{proof}
With probability one, a uniform subspace in $G(d+1,mn)$ is a general subspace. Recall that the codimension of $\Sigma_{m,n}^{\mathbb C}$ in $\mathbb C\mathrm P^{mn-1}$ is $(m-1)(n-1)$.  For $L$ corresponding to a linear subspace in $G(d+1,mn)$, let $L^{\mathbb C}$ denote the complex linear space spanned by~$L$.

If $d < (m-1)(n-1)$, then with probability one we have  $L\cap \Sigma_{m,n} = \emptyset$, and so $L$ is not spanned by rank-one matrices.

If $d = (m-1)(n-1)$, with probability one $L^{\mathbb C} \cap \Sigma_{m,n}^{\mathbb C}$ consists of $ \mathrm{deg}( \Sigma_{m,n}^{\mathbb C}) = \binom{m+n-2}{m-1}$ complex points. The \emph{General Position Theorem} (see, e.g., 
\cite[p.~109]{arbarello2013geometry}) asserts that any subset of $d+1$ points in $L^{\mathbb C}\cap \Sigma_{m,n}^{\mathbb C}$ is linearly independent. Consequently, if there are at least $d+1$  points in $L\cap \Sigma_{m,n}$, there are at least $d+1$ linearly independent points in $L\cap \Sigma_{m,n}$. Their span must be $L$.

Finally, suppose that $d > (m-1)(n-1)$. In this case, $L^{\mathbb C}\cap \Sigma_{m,n}^{\mathbb C}$ is a positive dimensional projective variety. A general choice of $d+1$ points in $L^{\mathbb C}\cap \Sigma_{m,n}^{\mathbb C}$ spans~$L^{\mathbb C}$ by the \emph{General Position Theorem}. If $L\cap \Sigma_{m,n}\neq \emptyset$, since a general linear section of~$\Sigma_{m,n}^{\mathbb C}$ is smooth by Bertini's theorem, $L\cap \Sigma_{m,n}$ consists of smooth points and hence is Zariski-dense in $L^{\mathbb C}\cap \Sigma_{m,n}^{\mathbb C}$. Therefore, we can find $d+1$ points in  $L\cap \Sigma_{m,n}$ that are general in~$L^{\mathbb C}\cap \Sigma_{m,n}^{\mathbb C}$ and hence span $L$.
\end{proof}

We now let $T\in\mathbb R^{m\times n\times \ell}$ be a Gaussian tensor with slices matrices $T_1,\ldots,T_\ell\in \mathbb R^{m\times n}$. With probability one, their linear span has dimension $\ell$ and hence defines a random point in the Grassmannian $G(\ell, mn)$. We have the following characterization of its distribution.
\begin{lemma}\label{lem_uniform}
Let $T\in\mathbb R^{m\times n\times \ell}$ be a Gaussian tensor with slices $T_1,\ldots,T_\ell\in \mathbb R^{m\times n}$. Let $L:=\langle T_1,\dots, T_\ell\rangle$ be the linear span of the slices; then the random linear subspace $L$ is uniform in the Grassmannian $G(\ell, mn)$.
\end{lemma}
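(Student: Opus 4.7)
The plan is to use the uniqueness of the $O(mn)$-invariant probability measure on $G(\ell,mn)$ (the characterization cited just before the lemma). I would show that the law of $L$ on $G(\ell,mn)$ is invariant under the natural $O(mn)$-action, and then conclude that this law must coincide with the uniform distribution.

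First I would observe that each slice $T_i$, viewed as an element of $\mathbb{R}^{mn}$ via the identification $\mathbb{R}^{m\times n}\cong\mathbb{R}^{mn}$, is a standard Gaussian vector, and that $T_1,\ldots,T_\ell$ are i.i.d.\ because the entries of $T$ are i.i.d.\ standard Gaussian. Since $\ell\leq mn$, a standard fact about i.i.d.\ Gaussians in $\mathbb{R}^{mn}$ gives that $T_1,\ldots,T_\ell$ are linearly independent with probability one, so $L:=\langle T_1,\ldots,T_\ell\rangle$ almost surely defines a well-defined random element of $G(\ell,mn)$.

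Next I would check $O(mn)$-invariance. Fix any $g\in O(mn)$. Because the standard Gaussian on $\mathbb{R}^{mn}$ is rotationally invariant, the joint distribution of $(gT_1,\ldots,gT_\ell)$ equals that of $(T_1,\ldots,T_\ell)$. Therefore the distribution of $g\cdot L = \langle gT_1,\ldots,gT_\ell\rangle$ on $G(\ell,mn)$ equals the distribution of $L$. This means the push-forward measure on $G(\ell,mn)$ is invariant under the full orthogonal group, and by the uniqueness statement recalled in \cref{sec:random_subspaces} (see \cite[Theorem 2.49 and Corollary 2.51]{folland}), this measure is the uniform distribution.

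There is no real obstacle here; the main thing to be careful about is the bookkeeping between $\mathbb{R}^{m\times n}$ and $\mathbb{R}^{mn}$ and the fact that the standard Gaussian measure on matrices with respect to the trace inner product \eqref{def_IP} is exactly the one that is invariant under $O(mn)$ acting on $\mathbb{R}^{mn}$. Once this is in place, the two short steps above — independence of the slices plus rotational invariance of the standard Gaussian — give the result.
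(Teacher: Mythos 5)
Your argument is correct and follows exactly the same route as the paper: show that the law of $L$ is invariant under the $O(mn)$-action via rotational invariance of the Gaussian, then invoke uniqueness of the invariant probability measure on $G(\ell,mn)$. The extra bookkeeping you supply (i.i.d.\ slices, almost-sure linear independence so that $L\in G(\ell,mn)$ is well defined) is a welcome but minor elaboration of the paper's two-line proof.
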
  
\begin{proof}
Since Gaussian matrices are invariant under multiplication by orthogonal matrices, the distribution of $L=\langle T_1,\ldots,T_\ell\rangle$ is $O(mn)$-invariant. Since there is a unique such distribution, $L$ must be uniform.
\end{proof}


\subsection{A Probabilistic Version of Friedland's Theorem}\label{sec:friedland_prob}

The following provides a probabilistic version of Friedland's theorem (\cref{thm:Fri2.4}), using the notation we have just introduced.  

\begin{proposition}\label{prop:generalsegre}
Let $T \in \R^{m\times n \times \ell}$ be a Gaussian tensor, with slices $T_1,\dots,T_\ell \in \R^{m\times n}$. Let $L:=\langle T_1,\dots, T_\ell\rangle_{\mathbb P}$ be the projectivization of the linear span of the slices. Then, with probability one, 
$$\mathrm{rank}(T)\leq \ell \quad \Longleftrightarrow\quad L \cap \Sigma_{m,n} \text{ contains at least $\ell$ points.}$$
\end{proposition}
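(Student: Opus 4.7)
The plan is to combine Friedland's theorem (\cref{thm:Fri2.4}) with \cref{prop_linear_sections} and the observation that, almost surely, the affine span $L_{\mathrm{aff}} := \langle T_1,\dots,T_\ell\rangle \subset \R^{m \times n}$ has dimension exactly $\ell$. This last fact holds because the $T_i$ are i.i.d.\ Gaussian matrices in $\R^{m\times n}$, so the probability of a linear dependence among them is zero (assuming $\ell \leq mn$, which is implicit since otherwise rank considerations change; but even in general the same argument works). Throughout, I will pass freely between $L_{\mathrm{aff}}$ and its projectivization $L \subset \P^{mn-1}$.

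For the direction $(\Rightarrow)$, suppose $\mathrm{rank}(T) \leq \ell$. By \cref{thm:Fri2.4} there are rank-one matrices $A_1,\dots,A_r \in \R^{m\times n}$ with $r \leq \ell$ such that $L_{\mathrm{aff}} \subseteq \langle A_1,\dots,A_r\rangle$. Since almost surely $\dim L_{\mathrm{aff}} = \ell$, we must have $r = \ell$ and $\langle A_1,\dots,A_\ell\rangle = L_{\mathrm{aff}}$ with the $A_i$ linearly independent. Their projectivizations $[A_1],\dots,[A_\ell]$ are pairwise distinct points in $\P^{mn-1}$ (linearly independent representatives cannot be scalar multiples), they all lie in $L$, and they all lie in $\Sigma_{m,n}$ because each $A_i$ has rank one. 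This produces $\ell$ distinct points in $L \cap \Sigma_{m,n}$.

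For the direction $(\Leftarrow)$, suppose $L \cap \Sigma_{m,n}$ contains at least $\ell$ points. With $d+1 = \ell$, i.e.\ $d = \ell - 1$, we invoke the last assertion of \cref{prop_linear_sections}: with probability one, $L$ is spanned (in the projective sense, i.e.\ as a linear subspace of $\P^{mn-1}$) by points of $L \cap \Sigma_{m,n}$. Pulling back to $\R^{m\times n}$, this means $L_{\mathrm{aff}}$ is spanned as a linear subspace by rank-one matrices contained in it. Since $\dim L_{\mathrm{aff}} = \ell$ almost surely, we can extract $\ell$ linearly independent rank-one matrices $A_1,\dots,A_\ell \in L_{\mathrm{aff}}$ such that $L_{\mathrm{aff}} = \langle A_1,\dots,A_\ell\rangle$. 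In particular, $\langle T_1,\dots,T_\ell\rangle \subseteq \langle A_1,\dots,A_\ell\rangle$, and \cref{thm:Fri2.4} yields $\mathrm{rank}(T) \leq \ell$.

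The main subtlety I expect is careful bookkeeping of the almost-sure events. Three independent a.s.\ facts must be combined: that $\dim L_{\mathrm{aff}} = \ell$; that $L$ is uniform in $G(\ell, mn)$ (this is \cref{lem_uniform}, needed so that \cref{prop_linear_sections} applies); and the a.s.\ conclusion of \cref{prop_linear_sections} itself. All three hold on the complement of a measure-zero set, so their intersection still has full probability, which is what the statement requires. Apart from this, the only other minor point is verifying that counting ``at least $\ell$ points'' in the projective intersection really does correspond to producing $\ell$ linearly independent rank-one matrices in $L_{\mathrm{aff}}$, which follows automatically from the spanning property guaranteed by \cref{prop_linear_sections}.
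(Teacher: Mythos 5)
Your proof is correct and follows essentially the same route as the paper: Friedland's theorem plus the almost-sure linear independence of the slices for the forward direction, and the last assertion of \cref{prop_linear_sections} (with $d=\ell-1$) plus Friedland again for the converse. The only discrepancy is your parenthetical claim that "the same argument works" when $\ell>mn$ --- there $\dim L_{\mathrm{aff}}=mn<\ell$ and your deduction $r=\ell$ fails; the paper instead disposes of $\ell\geq mn$ up front as a trivial case (rank is always at most $mn\leq\ell$ and $L=\P^{mn-1}$ meets $\Sigma_{m,n}$ in infinitely many points), which is the cleaner way to handle it.
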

\begin{proof}
Since the rank of $T$ is always bounded from above by $mn$, the proposition is trivial if $\ell \geq mn$, since with probability one $L=\P^{mn-1}$. Thus we can assume $\ell < mn$. 

Assume that $\mathrm{rank}(T) \leq \ell$. Then by \cref{thm:Fri2.4}, there exist real rank-one $m\times n$ matrices $\{x_i \otimes y_i\}_{i=1,\dots,\ell}$ such that their linear span has dimension $\ell$ and contains the linear span of $T_1,\dots,T_\ell$. Since $T$ has Gaussian independent entries, with probability one its slices span an $\ell$-dimensional subspace. This implies 
$L= \langle x_1\otimes y_1,\dots,x_\ell\otimes y_\ell\rangle_{\mathbb P}.$ Since the $\{x_i \otimes y_i\}_{i=1,\dots,\ell}$ are linearly independent, projectively they correspond to different points, proving that $L \cap \Sigma_{m,n}$ contains at least $\ell$ points. 

Vice versa, assume that the intersection $L \cap \Sigma_{m,n}$ contains at least $\ell$ real points. \cref{prop_linear_sections} implies that, with probability one, $L$ is spanned by rank-one matrices. Therefore, 
$\ell$ of the points in $L \cap \Sigma_{m,n}$, say $\{x_i\otimes y_i\}_{i=1,\dots,\ell}$, are linearly independent. Then, we have $L=\langle x_1\otimes y_1,\dots,x_\ell\otimes y_\ell\rangle_{\mathbb P}$. By \cref{thm:Fri2.4} it follows that $\mathrm{rank}(T)\leq \ell$. 
\end{proof}

We prove the first part of \cref{main0_intro} using \cref{thm:Fri2.4} and \cref{prop_linear_sections}. 
\begin{proof}[Proof of Theorem \ref{main0_intro} -- first part]
Let $T \in \R^{m\times n\times \ell}$ be a Gaussian tensor with slices $T_1,\dots,T_{\ell} \in \R^{m\times n}$ and let $L=\langle T_1,\dots,T_{\ell}\rangle$ be the linear span of the slices. By \cref{lem_uniform},~$L$ is uniform in~$G(\ell,mn)$. In particular, with probability one we have that $\dim(L)=\ell$ and so the rank of $T$ is at least $\ell$ by \cref{thm:Fri2.4}. Consequently, typical ranks are bounded from below by $\ell$. 

Next, we prove that typical ranks are bounded from above by $\ell+1$. 
We denote the projectivization of $L\in G(\ell,mn)$ also by $L$. 
If $L\cap \Sigma_{m,n}$ contains at least $\ell$ real points, by \cref{prop:generalsegre} the rank of $T$ is bounded by $\ell$. If this is not the case, let $x \otimes y$ be a general rank-one matrix such that $x \otimes y \notin L$ and define $\tilde L := L + \langle x\otimes y \rangle \in G(\ell+1,mn)$. We will now prove that $\tilde L$ is general.

Let $G_{\C}(d,N)$ denote the complex Grassmannian and recall that $\overline{G(d,N)}=G_{\C}(d,N)$ and $\overline{\Sigma_{m,n}}=\Sigma_{m,n}^{\C}$, where we are taking complex Zariski closures. Consider the rational map 
\begin{align*}
    \phi : G_{\C}(\ell,mn) \times \Sigma^{\C}_{m,n} &\dashrightarrow G_{\C}(\ell+1,mn) ,\quad (L,p)   \to  L + \langle p \rangle \ .
\end{align*}
Denote by $A := \phi(G(\ell,mn)\times \Sigma_{m,n})$. We want to prove that elements in $A$ are general in $G_{\C}(\ell+1,mn)$, that is, $\overline{A}=G_{\C}(\ell+1,mn)$. Note that $G_{\C}(\ell,mn)\times \Sigma_{m,n}^{\C}$ and $G_{\C}(\ell+1,mn)$ are irreducible and $\overline{G(\ell,mn)\times \Sigma_{m,n}}=G_{\C}(\ell,mn)\times \Sigma_{m,n}^{\C}$. We have that the Zariski closure of $A$ is the same of the Zariski closure of the image of $\phi$ (see, e.g., \cite[Lemma 1.2]{multiview}). By the last part of \cref{prop_linear_sections}, a general point in $G_{\C}(\ell+1,mn)$ is spanned by rank-one matrices, which implies
\begin{align*}
    \overline{A}=\overline{\phi(G_{\C}(\ell,mn)\times \Sigma_{m,n}^{\C})}=G_{\C}(\ell+1,mn).
\end{align*}
This proves that $\tilde L$ is general in $G_{\C}(\ell+1,mn)$. Since by construction $\tilde L\cap \Sigma_{m,n}\neq \emptyset$, the last part of \cref{prop_linear_sections} implies that $\tilde L$ is spanned by rank-one matrices. By \cref{thm:Fri2.4} this implies that the rank of $T$ is bounded by $\ell+1$. Thus, with probability one, the rank of $T$ is always bounded by $\ell+1$.
\end{proof}

We now prove  \cref{main1_intro}.
\begin{proof}[Proof of Theorem \ref{main1_intro}]
By \cref{main0_intro}, $\ell$ is the smallest typical rank. Therefore, with probability one, it holds that $\mathrm{rank}(T)\geq \ell$. Together with \cref{prop:generalsegre}, this shows that  $\mathrm{rank}(T) = \ell$ if and only if $L \cap \Sigma_{m,n}$ contains at least $\ell$ points. The equality of probabilities follows using \cref{lem_uniform}. 

Furthermore, \cref{prop_linear_sections} implies that for $\ell > (m-1)(n-1)+1$, with probability one, a single point in $L \cap \Sigma_{m,n}$ suffices to show that $L$ is spanned by rank-one matrices. Hence, $L \cap \Sigma_{m,n}$ contains at least dim$(L) = \ell$ points.
\end{proof}

Finally, we prove the second part of \cref{main0_intro}. 
\begin{proof}[Proof of Theorem \ref{main0_intro} -- second part]
We prove that $\ell$ is always a typical rank.

In the case $\ell>(m-1)(n-1)+1$, the second part of \cref{main1_intro} shows that $\ell$ is a typical rank if and only if ${\Prob} \{L \cap \Sigma_{m,n} \neq \emptyset\}>0$. 
To show that this probability is positive, we can simply take a point $A\in \Sigma_{m,n}$ and a general linear space $K$ that passes through $A$ such that $K$ is not tangential to $\Sigma_{m,n}$ at $A$. Then there exists a neighborhood of $K$ such that for all $L$ in this neighborhood, we have $L \cap \Sigma_{m,n} \neq \emptyset$, and so ${\Prob} \{L \cap \Sigma_{m,n} \neq \emptyset\}>0$. 

The case $\ell=(m-1)(n-1)+1$ is more difficult. By \cref{main1_intro}, we have to prove that ${\Prob} \{\#(L \cap \Sigma_{m,n})\geq \ell \}>0$. To see this, denote by $k:=mn-\ell = m+n-2$ and 
consider general points $a_1,\ldots,a_k\in\mathbb R^m$ and $b_1\ldots,b_k\in\mathbb R^n$. They define a linear subspace $\{M\in\mathbb R^{m\times n} \mid a_1^TMb_1 = \cdots = a_k^TMb_k=0\}$ of dimension $\ell$. For $x\otimes y = xy^T\in \Sigma_{m,n}$ these equations become
\begin{equation}
   \label{special_slice}(a_1^T x)(b_1^Ty) = \cdots = (a_k^T x)(b_k^Ty)=0.
\end{equation}
Since the $a_i$ and $b_i$ are general, at most $m-1$ of the $a_i^T x$ and at most $n-1$ of the $b_j^Ty$ can be simultaneously zero. Consequently, if all of the products in (\ref{special_slice}) are supposed to be zero, we can choose $x$ such that exactly $m-1$ of the $a_i^T x$ are zero and this then determines~$y$. This shows that there are $\mathrm{deg}(\Sigma_{m,n}^{\mathbb C})= \tbinom{m+n-2}{m-1}\geq \ell $ (see~(\ref{degree_segre})) real solutions of (\ref{special_slice}). Since they are all regular solutions, they can be continued in an open neighborhood of the system (\ref{special_slice}). Hence, $\Prob\{\#(L \cap \Sigma_{m,n})\geq \ell\}>0$. 
\end{proof}

\begin{remark}
If in \cref{special_slice} we use pairs of complex conjugate rank one-matrices (and one additional real rank one matrix if $k$ is odd), we get the following number of real solutions:
$$\begin{cases}
0, &\text{ if $m$ and $n$ are even}\\[0.5em]
\tbinom{(m+n-3)/2}{(m-1)/2}, &\text{ if $m$ is odd and $n$ is even}\\[0.5em]
\tbinom{(m+n-3)/2}{(m-2)/2}, &\text{ if $m$ is even and $n$ is odd}\\[0.5em]
\tbinom{(m+n-2)/2}{(m-1)/2}, &\text{ if $m$ and $n$ are odd}
\end{cases}$$
This is the number $\alpha(m,n)$ from \cite{SMS21}. In particular, this shows that if $m$ and $n$ are even, we always have two typical ranks. Notice that $\alpha(3,3)=2$, but we know from \cref{cor_cubic_lines} below that $\Prob\{\# (L\cap \Sigma_{3,3}) = 0\}>0$, so $\alpha(m,n)$ is not a lower bound for the number of points in $L\cap \Sigma_{m,n}$. 
\end{remark}

Another consequence of \cref{prop:generalsegre} is that, if $2 \leq m \leq n \leq \ell < (m-1)(n-1)+1$, then with probability one, the projectivization of the linear span of the $m\times n$ slices of a Gaussian tensor $T \in \R^{m\times n \times \ell}$ will have no intersection with the Segre variety $\Sigma_{m,n}$, since its dimension is lower than the codimension of $\Sigma_{m,n}$. Therefore the probability that the condition given in \cref{prop:generalsegre} to have $\mathrm{rank}(T)\leq\ell$ is satisfied is $0$. This means that typical ranks are all greater than or equal to $\ell +1$. 

Deciding whether or not $\ell+1$ is a typical rank is more subtle. In view of \cref{thm:Fri2.4}, we can see that having rank $\ell+1$ is equivalent to the existence of an $(\ell+1)$-plane containing the linear span of the slices of $T$, such that its projectivization intersects $\Sigma_{m,n}$ in at least $\ell+1$ linearly independent real points. The subtlety is that we are not asking for a general $(\ell+1)$-plane containing the span of the slices to have this property, but we only require the existence of a single such plane. Therefore, dimension counting arguments, which only provide general results, cannot be used.

\subsection{Typical Ranks of Tall Tensors}\label{sec:tall_tensors}
We conclude this section by giving another proof of a result of ten Berge. 
In \cite{tenBerge2000} for $2\leq m \leq n \leq \ell$, an $m\times n\times \ell$ tensor is defined to be \emph{tall} if $(m-1)n < \ell < mn$, and it is shown that the only typical rank for such tensors is always $\ell$. We now will use \cref{prop:generalsegre} once again to give a different proof of this result. 

Let $T \in \R^{m\times n \times \ell}$ be a tall Gaussian tensor and denote by $T_1,\dots,T_{\ell}$ its $m\times n$ slices. Let $k:=mn-\ell$. With probability one, the dimension of $\langle T_1,\dots,T_{\ell}\rangle$ is $\ell$ and therefore there exist $A_1,\dots,A_k \in \R^{m\times n}$ such that 
$$\langle T_1,\dots,T_{\ell}\rangle = \{M\in\P^{mn-1} \mid \mathrm{Trace}(M^TA_1) = \cdots = \mathrm{Trace}(M^TA_k)=0\}$$
(see \eqref{def_IP}). For a rank-one matrix $x\otimes y$ with $x \in \R^m$ and $y\in \R^n$, the condition $x\otimes y \in \langle T_1,\dots,T_{\ell}\rangle$ is equivalent to the system of equations
\begin{align*}
x^TA_1y= \cdots = x^TA_ky = 0, 
\end{align*}
which is in turn equivalent to 
\begin{align}\label{leftkernelA}
    x^T \cdot \mleft(A_1 y \dots A_k y\mright)=0,
\end{align}
i.e. $x^T$ is in the left kernel of the $m \times k$ matrix having columns $A_iy$ for $i=1,\dots,k$. For every $i=1,\dots,n$ define $B_i \in \R^{m\times k}$ as the matrix containing the $i$-th columns of the matrices $A_1,\dots,A_k$, i.e. the $j$-th column of $B_i$ is the $i$-th column of $A_j$. Then we have
\begin{align*}
    (A_1y,\dots,A_ky) = (B_1y_1 + \dots + B_ny_n)
\end{align*}
where $y=(y_1,\dots,y_n)$. Equation \eqref{leftkernelA} becomes 
\begin{align}\label{leftkernelB}
    x^T \cdot (B_1y_1 + \dots + B_ny_n)=0.
\end{align}
Since $T$ is tall, we have $k<m$ and equation \eqref{leftkernelB} always has non-trivial solutions. This means that for every $y \in \R^n$ we can find $x \in \R^m$ such that $x\otimes y$ belongs to $\langle T_1,\dots,T_{\ell}\rangle$. By \cref{prop:generalsegre} it follows that $\mathrm{rank}(T)\leq \ell$. By \cref{main0_intro}, $\ell$ is also the smallest typical rank, which implies that $\ell$ is the only typical rank.

\bigskip
\section{Random \texorpdfstring{$3\times 3\times 5$}{TEXT} tensors and cubic surfaces}
\label{sec:3x3x5}

In this section, we focus on the special case of Gaussian tensors of the format $3\times 3\times 5$. 

ten Berge \cite{tenBerge2004} showed that tensors in $\R^{3\times 3\times 5}$ admit two typical ranks: $5$ and~$6$. In this section, we express the probability that a Gaussian tensor $T$ has rank $5$ as the probability that the $27$ lines on a certain random cubic surface are all real. 

Recall that blowing up six points in $\mathbb C\mathrm P^2$ that are in general position (no three points are collinear and not all lie on a conic) produces a smooth cubic surface, and indeed every smooth cubic surface can be generated in this way \cite{Clebsch1866}.
\cref{lem_uniform}  implies that a $3\times 3\times 5$ Gaussian tensor $T$ induces a uniform subspace $L\subset \P^8$ of dimension~$4$. By \cref{main1_intro}, with probability one, if $T$ has rank $5$, it intersects $\Sigma_{3,3}$ in 6 points $x_1\otimes y_1,\ldots,x_6\otimes y_6$. We can therefore associate a random cubic surface to~$T$ that is the blow up of $\P^2$ in $x_1,\dots,x_6$ or $y_1,\dots,y_6$.

 Our results are based on the following theorem. Note that $\mathrm{codim}\, \Sigma_{3,3}^{\mathbb C} = 4$ and $\mathrm{deg}\, \Sigma_{3,3}^{\mathbb C} = 6$, so a general $4$-dimensional subspace of $\mathbb C\mathrm P^8$ intersects $\Sigma_{3,3}^{\mathbb C}$ in $6$ points.
\begin{theorem}
Let $L$ be a four-dimensional general linear subspace of $\P^{8}$. Then $L$ intersects $\Sigma_{3,3}^{\mathbb C}$ in the $6$ points  $x_1\otimes y_1,\dots,x_6 \otimes y_6 \in \mathbb C\mathrm P^{8}$ if and only if there exists a cubic surface $S$ that results both from blowing up $\mathbb C\mathrm P^2$ in $x_1,\dots,x_6$ as well as from blowing up~$\mathbb C\mathrm P^2$ in $y_1,\dots,y_6$, such that
$$
        S = \{[z_0:\dots:z_3] \in \P^3 \mid \det(z_0 M_0 + \dots + z_3 M_3)=0\},
$$
where $\langle M_0, \dots,M_3 \rangle = L^\perp$ is the orthogonal complement (see \eqref{def_OC}).
\label{thm:CAET}
\end{theorem}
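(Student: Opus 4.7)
The strategy is to realise $S$ as the blow-up of $\mathbb{C}\mathrm{P}^2$ at six points in two different ways, via the natural rational maps $\alpha, \beta \colon S \dashrightarrow \mathbb{C}\mathrm{P}^2$ sending $[z] \in S$ to the projectivised right and left kernel of the matrix $M(z) := z_0 M_0 + z_1 M_1 + z_2 M_2 + z_3 M_3$, respectively. The six exceptional points of $\alpha$ (resp.\ $\beta$) will be identified with the $y$-components (resp.\ $x$-components) of the six intersection points in $L \cap \Sigma_{3,3}^{\mathbb{C}}$.

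First I would establish smoothness of $S$ for general $L$. The singular locus of the determinantal cubic $\{\det M(z) = 0\}$ is the set of $[z]$ where $M(z)$ has rank at most one, i.e.\ where the $3$-plane $\mathbb{C}\mathrm{P}(L^\perp) \subset \mathbb{C}\mathrm{P}^8$ meets the Segre variety $\Sigma_{3,3}^{\mathbb{C}}$. Since $\Sigma_{3,3}^{\mathbb{C}}$ has codimension $4$ in $\mathbb{C}\mathrm{P}^8$ and $\mathbb{C}\mathrm{P}(L^\perp)$ is a general $3$-plane, this intersection is empty and $S$ is smooth, with $M(z)$ of corank exactly one on $S$. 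Consequently $\alpha$ is a morphism: the incidence variety $I_R := \{([z],[y]) \in \mathbb{C}\mathrm{P}^3 \times \mathbb{C}\mathrm{P}^2 : M(z)y = 0\}$ projects isomorphically onto $S$ via the first factor, and $\alpha$ is the composition with the second projection.

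Next I would analyse the fibers of $\alpha$. Writing $M(z)y = N(y)\,z$ with $N(y) := [M_0 y \mid M_1 y \mid M_2 y \mid M_3 y]$ a $3 \times 4$ matrix of linear forms in $y$, the fiber over $[y]$ equals $\mathbb{C}\mathrm{P}(\ker N(y))$: a single point when $\rank N(y) = 3$, and a line on $S$ when $\rank N(y) \leq 2$. The pivotal observation is
$$\rank N(y) \leq 2 \iff \exists\, x \neq 0 \text{ with } x^T M y = 0 \text{ for all } M \in L^\perp \iff x \otimes y \in L,$$
so the contracted points are precisely the $y$-coordinates of points in $L \cap \Sigma_{3,3}^{\mathbb{C}}$, namely $\{y_1,\ldots,y_6\}$. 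Hence $\alpha$ is a birational morphism from a smooth surface contracting six disjoint smooth rational curves, realising $S$ as $\mathrm{Bl}_{\{y_1,\ldots,y_6\}}\mathbb{C}\mathrm{P}^2$ (the six points being in general position because $S$ is smooth). The identical argument applied to $\beta$, using $N'(x) := [M_0^T x \mid M_1^T x \mid M_2^T x \mid M_3^T x]$, identifies $S$ with $\mathrm{Bl}_{\{x_1,\ldots,x_6\}}\mathbb{C}\mathrm{P}^2$.

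Both implications then follow. The forward direction is the content of the previous paragraphs. For the converse, the six points contracted by $\alpha$ are intrinsic to $S$ and its determinantal representation, hence must coincide both with the prescribed $y_i$ from the blow-up hypothesis and with the $y$-parts of $L \cap \Sigma_{3,3}^{\mathbb{C}}$; the analogous statement for $\beta$ yields the $x_i$. The pairing $x_i \leftrightarrow y_i$ is automatic because the six rank-one matrices $x_i \otimes y_i \in L$ intrinsically pair the two sets. The main technical obstacle is verifying that $\alpha$ and $\beta$ are honest blow-down morphisms -- in particular, that the six exceptional fibers are disjoint smooth rational $(-1)$-curves -- which can be handled via a Bertini-type argument on the matrix family $N(y)$ once smoothness of $S$ is in place.
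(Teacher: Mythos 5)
Your proposal takes a genuinely different route from the paper: the paper proves this theorem in one line, by citing \cite[Theorem 6.9]{CAET2023} for a general complex subspace and then observing that the statement descends to general real subspaces because $G(5,9)$ is Zariski dense in the complex Grassmannian. You instead reconstruct the determinantal-representation argument from scratch. Your forward direction is essentially sound: smoothness of $S$ holds because $\mathrm{Sing}\{\det=0\}$ is exactly the rank-$\leq 1$ locus $\Sigma_{3,3}^{\mathbb C}$, which the general $3$-plane $\mathbb{C}\mathrm{P}(L^\perp)$ misses for codimension reasons (together with Bertini on the smooth locus of the determinantal cubic); the equivalence $\mathrm{rank}\, N(y)\leq 2 \Leftrightarrow x\otimes y\in L$ is a correct unwinding of the trace pairing defining $L^\perp$; the six exceptional fibers are lines on $S$ that are pairwise disjoint, since a common point $[z]$ would force $\mathrm{corank}\, M(z)\geq 2$, i.e.\ $\mathbb{C}\mathrm{P}(L^\perp)\cap\Sigma_{3,3}^{\mathbb C}\neq\emptyset$; and contracting six disjoint $(-1)$-curves via Castelnuovo exhibits $S$ as the blow-up at six distinct points in general position. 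What your route buys is a self-contained proof that also explains the double-six remark following the theorem; what it costs is the surface-theoretic bookkeeping that the citation hides, which you correctly flag but do not fully carry out.

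There is, however, a genuine gap in your converse. You assert that the six points contracted by $\alpha$ are intrinsic to $S$ and its determinantal representation and ``hence must coincide with the prescribed $y_i$ from the blow-up hypothesis.'' But the hypothesis only says that $S$ \emph{results from} blowing up $\mathbb{C}\mathrm{P}^2$ at $y_1,\dots,y_6$, i.e.\ that \emph{some} birational morphism $S\to\mathbb{C}\mathrm{P}^2$ contracts to these points. A smooth cubic surface carries $72$ distinct blow-down structures (one for each sixer of mutually disjoint lines), and different sixers contract to genuinely different sextuples; the mere existence of a blow-down to $\{y_i\}$ therefore does not force $\{y_i\}$ to be the set contracted by your kernel map $\alpha$. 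To close this you must either read the statement as referring to the specific blow-downs induced by the determinantal representation (which is what \cite[Theorem 6.9]{CAET2023} actually establishes, and which makes the converse immediate from your forward analysis), or add an argument singling out the pair $(\alpha,\beta)$ among the $72$ structures, e.g.\ via the double-six compatibility with the pairing $x_i\leftrightarrow y_i$. Finally, your argument is carried out over $\mathbb C$, whereas the theorem concerns a general subspace of $\P^8$; you still need the paper's density observation (or the remark that your genericity conditions are nonempty Zariski-open conditions on the complex Grassmannian, hence hold for a general real $L$).
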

\begin{proof}
The theorem is proved in \cite[Theorem 6.9]{CAET2023} for a general linear subspace  of $\mathbb C\mathrm P^{8}$. Since the real Grassmannian is Zariski dense in the complex Grassmannian, it also holds for a general subspace in $\P^8$.
\end{proof}
\begin{remark}
\cite[Theorem 6.9]{CAET2023} also proves that the exceptional lines $\{h_1,\dots,h_6\}$ from blowing up in $\{x_1,\dots,x_6\}$ and the exceptional lines $\{h_1',\dots,h_6'\}$ from blowing up in $\{y_1,\dots,y_6\}$ form a Schl\"afli double six on $S$.
\end{remark}
\begin{remark}
Remarkably, \cite{CAET2023} is a paper that studies a problem in computer vision. The connection to ranks of random tensors is surprising.
\end{remark}
\begin{example}
Consider the real rank-one matrices $x_i\otimes y_i$ for $i=1,\ldots,6$ where
$$
\begin{array}{cccccc}
    x_1 = (1,0,0)
    &x_2 = (0,1,0)  
    &x_3 = (0,0,1)  
    &x_4 = (1,1,1)  
    &x_5 = (3,5,1) \\
    y_1 = (1,0,0)
    &y_2 = (0,1,0)
    &y_3 = (0,0,1)
    &y_4 = (1,1,1)
    &y_5 = (8,2,1) 

\end{array}
$$
and $x_6 = (-\tfrac{1}{3}, \tfrac{7}{5}, \tfrac{3}{17})$ and $y_6 = (-\tfrac{4}{3}, \tfrac{2}{5}, -\tfrac{1}{17})$. The first five rank-one matrices are linearly independent, but the sixth is linearly dependent on the first five. Therefore, the six rank-one matrices span a subspace $L \in G(5,9)$. The orthogonal complement is
$$L^\perp = \left\{ \left[ \begin{array}{ccc}
    0 & 3z_0 & 3z_2 \\
    -37z_0-2z_1-5z_2+z_3 & 0 & 3z_3 \\
    34z_0-z_1+2z_2-4z_3 & 3z_1 & 0
\end{array} \right] \, \biggm| \,z_0,\ldots,z_3\in\mathbb R \right\}.$$
The cubic surface $S$ constructed in \cref{thm:CAET} is thus given by
$$
    S = \{z\in \P^3 \mid z_0 z_3 (34z_0-z_1+2z_2-4z_3) + z_1 z_2 (-37z_0-2z_1-5z_2+z_3) = 0\} .
    \label{eq:S}
$$
It contains $27$ real lines.
\end{example}

\begin{figure}
\begin{center}
\includegraphics[height = 4cm]{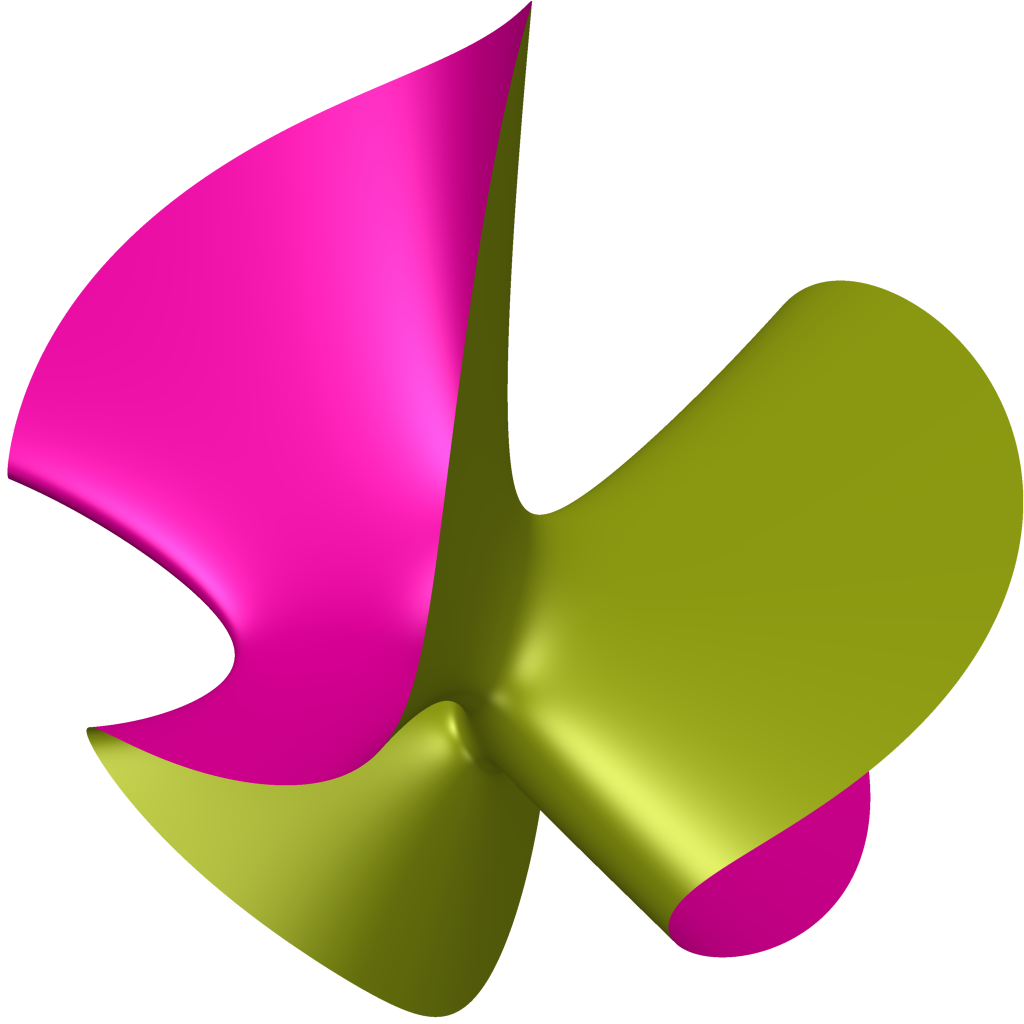}\qquad
\includegraphics[height = 4cm]{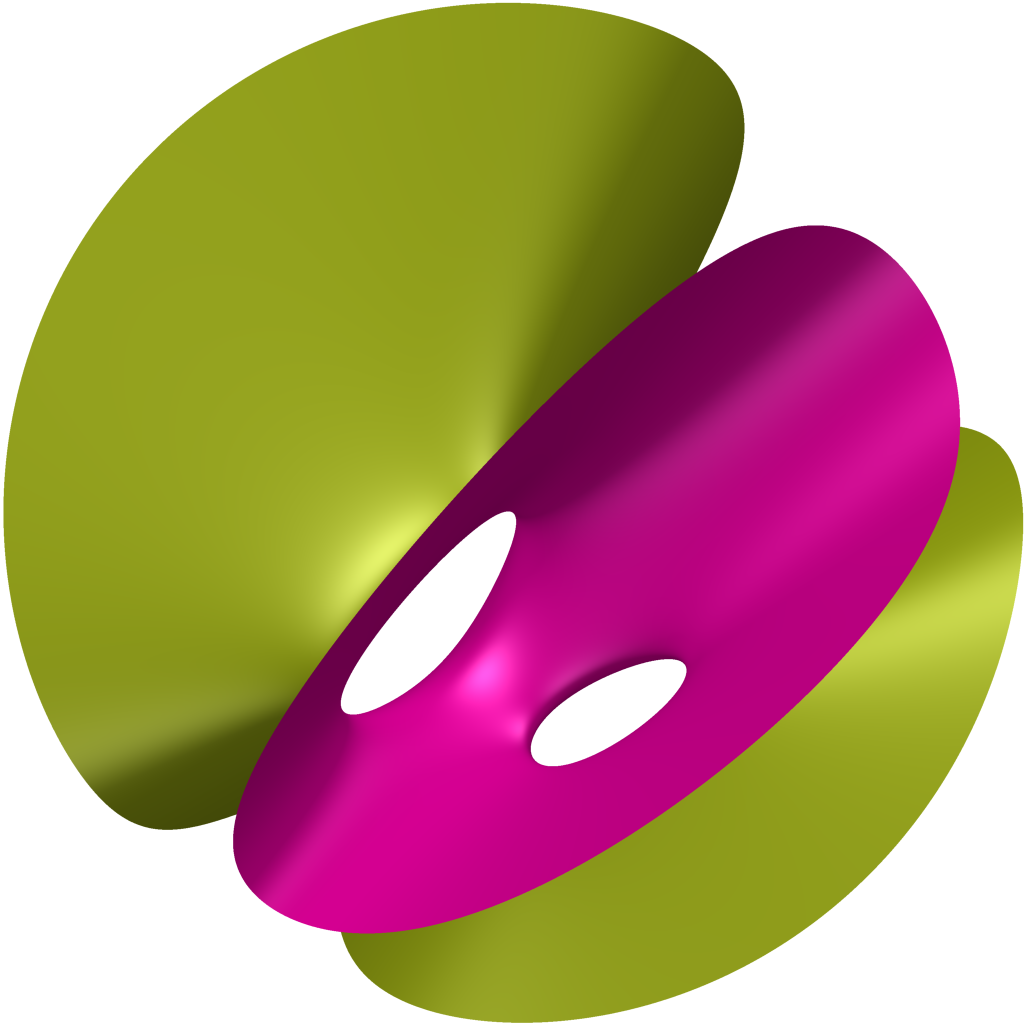}
\end{center}
\caption{The cubic surface $S$ in \cref{eq:S} intersected with the hyperplane $z_0=1$ seen from two different viewpoints. The pictures were created using  \href{https://www.imaginary.org/de/program/surfer}{\texttt{Surfer}}. 
}
\end{figure}

In 1849, Cayley and Salmon proved that every smooth cubic surface contains 27 lines \cite{Cayley1849}. Since their classical proof, many different proofs have been produced throughout the years. It was shown in 1858 by \cite{Schlaefli} that a real cubic surface can only have $3$, $7$, $15$, or $27$ real lines, leading to a classification of cubic surfaces. We will need  the following result due to Polo-Blanco and Top.
\begin{proposition}\cite[Proposition 2.1]{PBT2008}\label{prop_PBT}
Let $S$ be a smooth, real cubic surface.  If $S$ is obtained as the blow-up of
\begin{enumerate}
    \item $6$ real points in the plane, it contains $27$ real lines;
    \item $4$ real points and a pair of complex conjugate points in the plane, it contains $15$ real lines;
    \item $2$ real points and $2$ pairs of complex conjugate points in the plane, it contains $7$ real lines
    \item $3$ pairs of complex conjugate points in the plane, it contains $3$ real lines.
\end{enumerate}
\end{proposition}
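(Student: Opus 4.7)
The plan is to use the classical Cayley--Salmon description of the 27 lines on the complexification $S_{\mathbb C}$ of a smooth cubic surface $S$ obtained as the blow-up $\pi\colon S_{\mathbb C}\to \mathbb C\mathrm P^2$ of six points $p_1,\ldots,p_6$ in general position. These 27 lines split into three families: the six exceptional divisors $E_i=\pi^{-1}(p_i)$, the fifteen strict transforms $L_{ij}$ of the lines through $p_i$ and $p_j$ for $1\le i<j\le 6$, and the six strict transforms $C_i$ of the conics through the five points $\{p_j\}_{j\neq i}$. A line on $S_{\mathbb C}$ descends to a real line on $S$ if and only if it is fixed as a set by complex conjugation.

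The first step is to describe how complex conjugation $\sigma$ acts on these 27 lines. Since $S$ is defined over $\mathbb R$, the set $\{p_1,\ldots,p_6\}$ is $\sigma$-invariant and is permuted by some $\tau\in\mathfrak{S}_6$ whose cycle type is dictated by the hypothesis in each case: trivial in case (1), a single transposition in case (2), a product of two disjoint transpositions in case (3), and a product of three disjoint transpositions in case (4). By functoriality of the blow-up, $\sigma$ sends $E_i\mapsto E_{\tau(i)}$, $L_{ij}\mapsto L_{\tau(i)\tau(j)}$, and $C_i\mapsto C_{\tau(i)}$, so counting real lines reduces to counting $\tau$-invariant indices, $\tau$-invariant unordered pairs, and again $\tau$-invariant indices.

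Then a direct combinatorial check finishes the proof. In case (1), $\tau$ is trivial and all 27 lines are fixed. In case (2), $\tau$ is a single transposition (say $(56)$), which fixes $4$ indices, $\binom{4}{2}+1=7$ pairs (the $6$ pairs contained in $\{1,2,3,4\}$ together with the transposed pair $\{5,6\}$), and $4$ conic classes, for a total of $15$. In case (3), $\tau=(34)(56)$ fixes $2$ indices, the $3$ pairs $\{1,2\},\{3,4\},\{5,6\}$ (any ``mixed'' pair being sent to a different one), and $2$ conic classes, for a total of $7$. In case (4), $\tau=(12)(34)(56)$ fixes no index, the $3$ transposed pairs, and no conic class, for a total of $3$.

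The only non-trivial conceptual point is the identification of the real structure on $S_{\mathbb C}$ with the permutation $\tau$ acting on the Cayley--Salmon labels. This follows because the three families are intrinsic (characterized by intersection numbers with the hyperplane class and with one another), and $\pi$ is defined over $\mathbb R$ as soon as $\{p_1,\ldots,p_6\}$ is $\sigma$-stable, so the $\mathbb R$-structure on $S_{\mathbb C}$ is the pullback of the one on $\mathbb C\mathrm P^2$. Once this is granted, the proof is a straightforward orbit count, and I do not foresee any further obstacle.
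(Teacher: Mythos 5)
The paper does not prove this statement; it is quoted verbatim from Polo-Blanco and Top \cite{PBT2008} and used as a black box. Your argument is correct and is essentially the standard (and, as far as I can tell, the cited reference's) proof: the 27 lines split into the classes $E_i$, $L_{ij}$, $C_i$, the real structure on $S$ pulls back from the conjugation-stable configuration of six points, so the anti-holomorphic involution permutes the labels by the induced permutation $\tau$, and a line is real precisely when it is set-wise fixed; your orbit counts ($27$, $4+7+4=15$, $2+3+2=7$, $0+3+0=3$) are all correct. The only point worth making explicit is why a conjugation-invariant line of $\mathbb{C}\mathrm{P}^3$ actually has real points (a $\sigma$-stable $2$-dimensional subspace of $\mathbb{C}^4$ is the complexification of its real part), but that is a one-line linear-algebra fact and does not constitute a gap.
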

\begin{remark}
There is a fifth case in \cite[Proposition 2.1]{PBT2008}, but this case appears with probability zero, so we do not include it. 
\end{remark}
\begin{corollary}\label{cor_cubic_lines}
 Let $M_0,M_1,M_2,M_3\in\mathbb R^{3\times 3}$ be independent Gaussian matrices and consider the random cubic surface  
$
        S = \{[z\in \P^3 \mid \det(z_0 M_0 + \dots + z_3 M_3)=0\}.
$
Let $L\in G(5,9)$ be a uniform subspace and denote also by $L$ its projectivization.
Denote by
$$p_i := \Prob\{\# (L\cap \Sigma_{3,3}) = i\} \quad\text{and}\quad q_j:= \Prob\{S \text{ contains $j$ real lines}\}.$$
Then, 
$$p_0 = q_3,\quad 
p_2 = q_7,\quad 
p_4 = q_{15},\quad 
p_6 = q_{27}.$$
\end{corollary}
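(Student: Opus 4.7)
The plan is to match each case of $\#(L \cap \Sigma_{3,3})$ with one of the four blow-up configurations enumerated in \cref{prop_PBT} and use \cref{thm:CAET} to identify the random cubic $S$ with the appropriate blow-up.

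First I would note that by \cref{lem_OC}, the subspace $L^\perp$ is uniform in $G(4,9)$, and with probability one $M_0,\ldots,M_3$ (being independent Gaussian matrices) are linearly independent, so $\langle M_0,\ldots,M_3\rangle$ is also uniform in $G(4,9)$. Thus the distribution of $L^\perp$ equals the distribution of $\langle M_0,\ldots,M_3\rangle$, and we may assume the cubic surface $S$ in the corollary is the one from \cref{thm:CAET}. With probability one $L$ is general, so $L\cap \Sigma_{3,3}^{\mathbb{C}}$ consists of exactly $\deg(\Sigma_{3,3}^{\mathbb{C}})=6$ points $x_1\otimes y_1,\dots,x_6\otimes y_6$, and by a Bertini-type argument (together with smoothness of $\Sigma_{3,3}$), the surface $S$ is smooth and the six points are in general position in $\mathbb{CP}^2$, so \cref{prop_PBT} applies.

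Next, the key observation is that the complex conjugate of a rank-one matrix $x\otimes y$ is $\bar x\otimes \bar y$, and a point of $\Sigma_{3,3}^{\mathbb{C}}$ is real precisely when it admits a real representative $x\otimes y$ with $x,y\in\mathbb{R}^3$. Because $L$ is defined over $\mathbb{R}$, the set $L\cap\Sigma_{3,3}^{\mathbb{C}}$ is invariant under complex conjugation, so after relabelling there is some $k\in\{0,1,2,3\}$ for which $x_1\otimes y_1,\dots,x_{2k}\otimes y_{2k}$ are real, and the remaining $3-k$ points come in complex conjugate pairs. In particular $\#(L\cap\Sigma_{3,3})=2k$ is even, so the probabilities $p_1,p_3,p_5$ vanish and the remaining $p_0,p_2,p_4,p_6$ sum to one. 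Correspondingly, in the blow-up description of $S$ from \cref{thm:CAET}, the base points $x_1,\dots,x_6\in\mathbb{CP}^2$ split as $2k$ real points and $3-k$ complex conjugate pairs.

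I would then invoke \cref{prop_PBT} directly: the four cases $k=3,2,1,0$ give, respectively, $27,15,7,3$ real lines on $S$. Thus the event $\{\#(L\cap\Sigma_{3,3})=2k\}$ coincides (up to a probability-zero set) with the event $\{S\text{ contains }j\text{ real lines}\}$ for the corresponding $j\in\{27,15,7,3\}$, yielding $p_6=q_{27}$, $p_4=q_{15}$, $p_2=q_7$, and $p_0=q_3$.

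The main obstacle is justifying the ``generic'' hypotheses that feed into \cref{thm:CAET} and \cref{prop_PBT}: smoothness of $S$, transversality of the six intersection points, and general position of $\{x_1,\dots,x_6\}$ in $\mathbb{CP}^2$ (no three collinear, not all on a conic). Since these are Zariski-open conditions on $G_{\mathbb{C}}(5,9)$ and $L$ is absolutely continuous with respect to the Zariski measure on the real Grassmannian, each fails with probability zero, so the case-by-case matching above holds almost surely, and the equalities of probabilities follow.
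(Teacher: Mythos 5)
Your proposal follows essentially the same route as the paper: identify $L^\perp$ with $\langle M_0,\dots,M_3\rangle$ via \cref{lem_OC} and \cref{lem_uniform}, use \cref{thm:CAET} to realize $S$ as the blow-up of the six intersection points, split those points into $2k$ real ones and $3-k$ conjugate pairs, and read off the real line count from \cref{prop_PBT}. The one step you state too quickly is the assertion that the event $\{\#(L\cap\Sigma_{3,3})=2k\}$ \emph{coincides} with $\{S\text{ contains }j_k\text{ real lines}\}$: \cref{prop_PBT} only gives the forward inclusion, i.e.\ $p_{2k}\leq q_{j_k}$, and by itself it does not rule out, say, a surface with $27$ real lines arising from an $L$ meeting $\Sigma_{3,3}$ in only $4$ real points. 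The paper closes this by noting that both families of events partition the sample space up to measure zero: you proved $p_0+p_2+p_4+p_6=1$, and Schl\"afli's classification together with almost-sure smoothness of $S$ gives $q_3+q_7+q_{15}+q_{27}=1$, so the four inequalities $p_{2k}\leq q_{j_k}$ force equality throughout. You have all the ingredients for this (the mutual exclusivity of the four possible line counts does the work), but the sentence ``thus the events coincide'' needs that extra line to be a complete argument.
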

\begin{proof}
Since $\mathrm{deg}\, \Sigma_{3,3}^{\mathbb C} = 6$, a general real projective linear subspace $L$ intersects $\Sigma_{3,3}$ in either $0,2,4$ or $6$ real points. Therefore, $p_0+p_2+p_4+p_6 = 1$. On the other hand, a smooth cubic surface contains either $3,7,15$ or $27$ real lines. Since $S$ is smooth with probability one,~$q_3+q_7+q_{15}+q_{27}=1$.  

By \cref{lem_OC}, $L\in G(5,9)$ is uniform if and only if $L^\perp\in G(4,9)$ is uniform. Moreover, by \cref{lem_uniform} $\langle M_0,\ldots,M_3\rangle \in G(4,9)$ is also uniformly distributed. It follows from \cref{thm:CAET} that 
$p_i$ is the probability that $S$ is obtained as the blow up of $i$ real points and $(6-i)/2$ pairs of complex conjugate points in the plane.  
\cref{prop_PBT} implies that $p_6 \leq q_{27}$, $p_4 \leq q_{15}$, $p_2 \leq q_{7}$  and~$p_0 \leq q_{3}$. If one of those was a strict inequality, then $q_3+q_7+q_{15}+q_{27} < p_0+p_2+p_4+p_6 = 1$, which is a contradiction. Hence, all are equalities.
\end{proof}

The stage is now set to relate the probability that a random real $3\times 3\times 5$ tensor has rank 5 to the probability that all $27$ lines on a random cubic surface are real.
\begin{proof}[Proof of Theorem \ref{thm:rank-reallines_intro}]
As before, we write $p_6 = \Prob\{\# (L\cap \Sigma_{3,3}) = 6\}$. By \cref{main1_intro}, $\Prob\{\text{rank }T = 5\}$ is the probability that for a uniform subspace $L\in G(5,9)$, its projectivization $L$ intersects $\Sigma_{3,3}$ in at least $5$ points. Since $L$ intersects $\Sigma_{3,3}$ in either $0$, $2$, $4$ or $6$ points with probability one, we have 
$$\Prob\{\text{rank }T = 5\} = p_6.$$
The statement follows from \cref{cor_cubic_lines}.
\end{proof}

We conclude this section by studying the expected number of real lines on our random cubic. Denote by
$$E: = \mathbb E\, \text{ number of real lines on $S$},$$
where $S = \{\det(z_0 M_0 + \dots + z_3 M_3)=0\}$ with $M_i$ independent and Gaussian. We show in \cref{ex1} below that the expected number of points in $L\cap \Sigma_{3,3}$ for uniform $L$ is $3$. Together with \cref{cor_cubic_lines} we obtain the following system of equalities and inequalities:
$$
\begin{cases}
0\leq p_0,p_2,p_4,p_6\leq 1\\
p_0+p_2+p_4+p_6 = 1\\
2p_2 + 4p_4 + 6p_6 = 3\\
3p_0 + 7p_2 + 15p_4 + 27p_6 = E    
\end{cases}$$
Treating $p_0,p_2,p_4,p_6$ and also $E$ as variables, the solution set of this system is a two-dimensional polytope. Projected to the $(E,p_6)$-plane this polytope is given as the convex hull of the four points $(11,0),(12,0),(12,\tfrac{1}{4}),(15,\tfrac{1}{2})$ 
From this we obtain the following nontrivial bound for $E$.
\begin{corollary}\label{cor_E}
 We have $11\leq E\leq 15$.
\end{corollary}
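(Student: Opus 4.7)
The plan is to treat the displayed system as a two-parameter linear program and read off the extremes of $E$ by vertex enumeration. Since the constraints on $p_0,p_2,p_4,p_6$ are: nonnegativity, normalization $p_0+p_2+p_4+p_6=1$, and the expectation identity $2p_2+4p_4+6p_6=3$ (with $E=3p_0+7p_2+15p_4+27p_6$ a linear functional on the solution set), the feasible set is a polytope and $E$ attains its extrema at vertices. The inequalities $p_i\le 1$ are automatic once $p_i\ge 0$ and $\sum p_i=1$ hold, so I would just drop them.

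First I would solve the two linear equalities for $p_0$ and $p_2$ in terms of $(p_4,p_6)$, obtaining
\[
p_2=\tfrac{3}{2}-2p_4-3p_6,\qquad p_0=p_4+2p_6-\tfrac{1}{2},
\]
and substitute into the expression for $E$. A short computation collapses the sum to the affine function
\[
E=9+4p_4+12p_6.
\]
Thus it suffices to bound $4p_4+12p_6$ over the $(p_4,p_6)$-region defined by $p_4\ge 0$, $p_6\ge 0$, $p_4+2p_6\ge\tfrac{1}{2}$ (from $p_0\ge 0$), and $2p_4+3p_6\le\tfrac{3}{2}$ (from $p_2\ge 0$).

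Next I would enumerate the vertices of this quadrilateral, obtained by pairwise intersection of the four bounding lines: $(1/2,0)$, $(3/4,0)$, $(0,1/4)$, $(0,1/2)$. Evaluating $E=9+4p_4+12p_6$ at each yields $11,\ 12,\ 12,\ 15$ respectively. Since a linear functional on a polytope attains its minimum and maximum at vertices, we conclude $11\le E\le 15$, with the two extremal distributions being $(p_0,p_2,p_4,p_6)=(0,\tfrac12,\tfrac12,0)$ for $E=11$ and $(p_0,p_2,p_4,p_6)=(\tfrac12,0,0,\tfrac12)$ for $E=15$. Projecting the full two-dimensional polytope to the $(E,p_6)$-plane recovers the convex hull of $(11,0),(12,0),(12,\tfrac14),(15,\tfrac12)$ mentioned in the excerpt, providing a sanity check.

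There is no real obstacle here: once one uses the earlier fact that $\mathbb{E}\,\#(L\cap\Sigma_{3,3})=3$ (from Example \ref{ex1}) together with Corollary \ref{cor_cubic_lines} to set up the system, the rest is a two-dimensional linear optimization. The only point worth flagging is that the bounds are tight in the LP sense but this does not by itself exhibit matrix ensembles realizing $E=11$ or $E=15$; attainability on the polytope merely says the constraints are consistent, not that the corresponding $(p_i)$ actually arises from the Gaussian model, so sharper values of $E$ would require a genuine probability computation rather than further LP manipulation.
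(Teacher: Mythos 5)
Your proposal is correct and follows essentially the same route as the paper: both set up the linear system from \cref{cor_cubic_lines} together with $\mathbb{E}\,\#(L\cap\Sigma_{3,3})=3$ and bound $E$ by optimizing over the resulting two-dimensional polytope, whose projection to the $(E,p_6)$-plane is exactly the convex hull of $(11,0),(12,0),(12,\tfrac14),(15,\tfrac12)$ stated in the paper. Your explicit vertex enumeration and the closing caveat that the LP bounds need not be attained by the actual Gaussian model are both accurate.
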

The expected number of lines for other models of random cubic surfaces has been computed in \cite{ABM2021, ACL2010, BLLP2019}. Comparing \cref{cor_E} to these results,~$E$ seems to be comparably large.

\begin{figure}
\begin{center}
\begin{tikzpicture}
\draw[dashed, ->] (8,0) -- (16,0);
\draw[dashed, ->] (8,0) -- (8,2.5);
\draw (8,2.5) node[left]{$p_6$};
\draw (16,0) node[below]{$E$};

\fill[teal, opacity = 0.5] (11,0) -- (12,0) -- (15,2) -- (12,1) -- cycle;
\draw (11,0) node{$\bullet$};
\draw (11,0) node[below left]{$(11,0)$};
\draw (12,0) node{$\bullet$};
\draw (12,0) node[below right]{$(12,0)$};
\draw (15,2) node{$\bullet$};
\draw (15,2) node[above right]{$(15,\tfrac{1}{2})$};
\draw (12,1) node{$\bullet$};
\draw (12,1) node[above left]{$(12,\tfrac{1}{4})$};

\end{tikzpicture}
\end{center}
\caption{\label{fig_polygon}The polygon of possible values for $(E,p_6)$, where $E$ is the expected number of real lines on the random cubic, and $p_6$ the probability that it has 27 real lines.}
\end{figure}
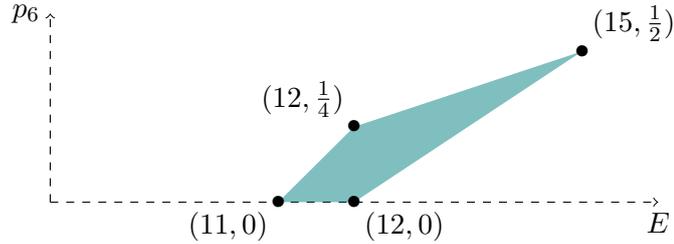

\bigskip
\section{Asymptotics and heuristics}
\label{sec:heur}
In \cref{main1_intro} we proved that the rank of a random tensor $T \in \R^{m\times n \times \ell}$ with $\ell:=(m-1)(n-1)+1$ depends on whether the corresponding random linear subspace spanned by its slices projectively intersects the Segre variety in enough real points. In this section, we will compute the average number of real points in such an intersection and compute its asymptotic for $m$ fixed as $n \rightarrow +\infty$. Comparing the growth of this expectation with the growth of the required number of real points, we obtain some insight into how the probability of having rank $\ell$ behaves. \\

We start by stating the real projective version of Howard's kinematic formula \cite{Howard}. Recall that given a Riemannian manifold $(M,g)$, a submanifold $N \overset{\iota}{\hookrightarrow} M$ inherits an induced Riemannian metric by pullback through $\iota$, denoted as $\iota^*g$. By $\mathrm{vol}(N)$ we mean the total volume of $N$ with respect to the induced metric (if $\mathrm{dim}(N)<\mathrm{dim}(M)$ its volume with respect to $g$ is $0$). If $N$ is compact, $\mathrm{vol}(N)$ will always be finite. In the case $M=\P^{mn-1}$ with the Fubini-Study metric and $N=\Sigma_{m,n}$, Howard's kinematic formula yields 
$$\mathbb E\, \#(L\cap \Sigma_{m,n}) = \frac{\mathrm{vol}(\Sigma_{m,n})}{\mathrm{vol}(\P^{m+n-2})},$$
where $L$ corresponds to a uniform subspace in $G(\ell,mn)$. 

Moreover, $\mathrm{vol}(\Sigma_{m,n})=\mathrm{vol}(\P^{m-1})\cdot \mathrm{vol}(\P^{n-1})$ and $\mathrm{vol}(\P^{k-1})=\frac{\pi^{\frac{k}{2}}}{\Gamma(\frac{k}{2})}$, leading to 
\begin{align}\label{expectation}
    \mathbb{E}(\#\{\Sigma_{m,n}\cap L\})=\sqrt{\pi}\cdot\frac{\Gamma(\frac{m+n-1}{2})}{\Gamma(\frac{m}{2})\Gamma(\frac{n}{2})}.
\end{align}
\begin{example}\label{ex1}
    For $m=3$, Equation \eqref{expectation} becomes
$$
        \mathbb{E}(\#\{\Sigma_{3,n}\cap L\})= n.
$$
    Therefore, for a random tensor $T \in \R^{3\times n \times (2n-1)}$ with the usual notation, \cref{main1_intro} combined with Markov's inequality yields
    \begin{align*}
        \Prob\{\mathrm{rank}(T)=2n-1\}=\Prob\{\#\{\Sigma_{3,n}\cap L\}\geq 2n-1\}\leq \frac{n}{2n-1} < 1. 
    \end{align*}
    This gives another proof of the fact that there are two typical ranks when $m=3$ and $\ell = (m-1)(n-1)+1$, since it implies that $\Prob\{\mathrm{rank}(T)>2n-1\}$ is positive. This is also an alternative proof to \cite[Theorem 5.5]{SMS21} for the case $m=3$. 
\end{example}

We are now interested in the asymptotics of \eqref{expectation} when $m$ is fixed and $n\to +\infty$. In order to compute this, we consider two cases, depending on the parity of $m$. 
\begin{proposition}
\label{lemma:expectation}
Let $L$ be a uniform projective $(m-1)(n-1)$-dimensional subspace.
 For $m$ fixed, as $n\to +\infty$, we have the asymptotic expression
    \begin{align*}
        \mathbb{E}(\#\{\Sigma_{m,n}\cap L\})=c\cdot n^{\frac{m-1}{2}}\mleft(1+\mathcal{O}\mleft(\frac{1}{n}\mright)\mright),
    \end{align*}
    where $c = \frac{1}{(m-2)!!}$ for $m$ odd and $c=\sqrt{\frac{\pi}{2}}\frac{1}{(m-2)!!}$ for $m$ even. 
\end{proposition}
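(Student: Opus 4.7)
The plan is to start from the closed formula \eqref{expectation} and reduce the asymptotic to the standard ratio-of-Gamma-functions estimate
\begin{equation*}
\frac{\Gamma(x+a)}{\Gamma(x+b)} = x^{a-b}\!\left(1+\mathcal{O}\!\left(\tfrac{1}{x}\right)\right),\qquad x\to +\infty,
\end{equation*}
which follows from Stirling's formula. Rewriting $\Gamma(\tfrac{m+n-1}{2})=\Gamma(\tfrac{n}{2}+\tfrac{m-1}{2})$ and applying this with $x=\tfrac{n}{2}$, $a=\tfrac{m-1}{2}$, $b=0$ immediately gives
\begin{equation*}
\mathbb{E}(\#\{\Sigma_{m,n}\cap L\})=\frac{\sqrt{\pi}}{\Gamma(\tfrac{m}{2})}\cdot \left(\tfrac{n}{2}\right)^{\frac{m-1}{2}}\!\left(1+\mathcal{O}\!\left(\tfrac{1}{n}\right)\right),
\end{equation*}
so only the prefactor $c=\dfrac{\sqrt{\pi}}{\Gamma(m/2)}\cdot 2^{-(m-1)/2}$ needs to be rewritten in the two claimed forms.

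The plan is then to split on the parity of $m$. If $m=2k+1$ is odd, use the classical identity $\Gamma(k+\tfrac{1}{2})=\dfrac{(2k-1)!!}{2^{k}}\sqrt{\pi}$, which gives $\dfrac{\sqrt{\pi}}{\Gamma(m/2)}=\dfrac{2^{(m-1)/2}}{(m-2)!!}$. Combining with the factor $2^{-(m-1)/2}$ from $(n/2)^{(m-1)/2}$ the powers of $2$ cancel exactly, leaving $c=\dfrac{1}{(m-2)!!}$. If $m=2k$ is even, use $\Gamma(k)=(k-1)!$ and the double factorial identity $(2k-2)!!=2^{k-1}(k-1)!$; a short computation gives $\dfrac{\sqrt{\pi}}{\Gamma(m/2)}\cdot 2^{-(m-1)/2}=\dfrac{\sqrt{\pi/2}}{(m-2)!!}$, which is the claimed constant.

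The main ``obstacle'' is purely bookkeeping: making sure the powers of $2$ produced by $(n/2)^{(m-1)/2}$, by the double factorial identity $(2k-1)!!=(2k)!/(2^k k!)$, and by the expression of $\Gamma(m/2)$ match up correctly, and that the error term propagates cleanly (it does, since multiplying by a constant depending only on $m$ preserves the $1+\mathcal{O}(1/n)$). No further analytic input is needed beyond the Gamma ratio asymptotic, which is standard and can be cited from any reference on asymptotic expansions (e.g.\ from Stirling's formula applied to $\log\Gamma$).
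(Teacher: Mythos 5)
Your proposal is correct: the constant bookkeeping checks out in both parities ($c=\tfrac{\sqrt{\pi}}{\Gamma(m/2)}2^{-(m-1)/2}$ reduces to $\tfrac{1}{(m-2)!!}$ for $m$ odd via $\Gamma(k+\tfrac12)=\tfrac{(2k-1)!!}{2^k}\sqrt{\pi}$, and to $\sqrt{\pi/2}\,\tfrac{1}{(m-2)!!}$ for $m$ even via $(2k-2)!!=2^{k-1}(k-1)!$), and the error term propagates as you say. Your route is, however, genuinely different from the paper's. The paper splits on the parity of $m$ \emph{before} doing any asymptotics: for $m=2k+1$ odd it uses the functional equation $\Gamma(x+1)=x\Gamma(x)$ to rewrite the expectation as the exact polynomial $\prod_{i=0}^{k-1}\tfrac{n/2+i}{1/2+i}$ in $n$ and reads off the leading coefficient; for $m=2k$ even it again peels off integer shifts and is left with the half-integer ratio $\Gamma(\tfrac{n+1}{2})/\Gamma(\tfrac{n}{2})$, which it handles through the Beta-function asymptotic $B(\tfrac12,\tfrac n2)\sim\sqrt{2\pi/n}$. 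You instead apply the uniform Stirling-type estimate $\Gamma(x+a)/\Gamma(x)=x^{a}(1+\mathcal{O}(1/x))$ once, with $a=\tfrac{m-1}{2}$, so the parity split enters only in simplifying the constant. Your argument is shorter and more unified (the paper's Beta asymptotic is really the special case $a=\tfrac12$ of your estimate); the paper's odd case buys slightly more, namely an exact closed-form polynomial for the expectation rather than just its leading-order behavior.
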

\begin{proof}
We first consider the case when $m$ is odd.
Let  $m=2k+1$ for some $k\geq 1$. In this case, \eqref{expectation} reads
    \begin{align*}
        \mathbb{E}(\#\{\Sigma_{2k+1,n}\cap L\})=\sqrt{\pi}\cdot \frac{\Gamma(\frac{n}{2}+k)}{\Gamma(k+\frac{1}{2})\Gamma(\frac{n}{2})}.
    \end{align*}
    By the multiplicative property of the Gamma function $\Gamma(x+1)=x\Gamma(x)$, we obtain 
    \begin{align*}
        \Gamma\left(\frac{n}{2}+k\right)&=\Gamma\left({\frac{n}{2}}\right)\prod_{i=0}^{k-1}\left(i+\frac{n}{2}\right), \quad \text{and}\quad 
        \Gamma\left(\frac{1}{2}+k\right)=\sqrt{\pi}\prod_{i=0}^{k-1}\left(i+\frac{1}{2}\right),
    \end{align*}
 leading to
    \begin{align}\label{oddexpectation}
        \mathbb{E}(\#\{\Sigma_{2k+1,n}\cap L\})=\prod_{i=0}^{k-1}\frac{\left(\frac{n}{2}+i\right)}{\left(\frac{1}{2}+i\right)}.
    \end{align}
    This is a polynomial of degree $k=\frac{m-1}{2}$ in $n$, with leading coefficient given by
    \begin{align*}
        \frac{(\frac{1}{2})^k}{\prod_{i=0}^{k-1}\frac{2i+1}{2}}=\frac{1}{\prod_{i=0}^{k-1}(1+2i)}=\frac{1}{(2k-1)!!},
    \end{align*}
    where $(2k-1)!!=(2k-1)\cdot (2k-3)\cdot \dots \cdot 3\cdot 1$ is the double factorial. Thus we obtain the asymptotic expression 
        \begin{align}\label{oddasymptotic}
            \mathbb{E}(\#\{\Sigma_{m,n}\cap L\}) = \frac{1}{(m-2)!!}\, n^{\frac{m-1}{2}}\mleft(1+\mathcal{O}\mleft(\frac{1}{n}\mright)\mright) \quad \text{for} \ m \ \text{odd}.
        \end{align}

We now consider the case when $m$ is even.
Let $m=2k$ for some $k\geq 2$. In this case, \eqref{expectation} reads
    \begin{align*}
        \mathbb{E}(\#\{\Sigma_{2k,n}\cap L\})=\sqrt{\pi}\frac{\Gamma\left(\frac{n-1}{2}+k\right)}{\Gamma(k)\Gamma\left(\frac{n}{2}\right)}.
    \end{align*}
    Again by the multiplicative property of the Gamma function we have
    \begin{align*}
        \Gamma\left(\frac{n-1}{2}+k\right)=\Gamma\left(\frac{n+1}{2}\right)\prod_{i=1}^{k-1}\left(\frac{n-1}{2}+i\right).
    \end{align*}
    Using above identity and $\Gamma(k)=(k-1)!$, we obtain 
    \begin{align*}
        \mathbb{E}(\#\{\Sigma_{2k,n}\cap L\})&=\pi \cdot \frac{\prod_{i=1}^{k-1}\mleft(\frac{n-1}{2}+i\mright)}{(k-1)!} \cdot \frac{\Gamma\mleft(\frac{n+1}{2}\mright)}{\Gamma\mleft(\frac{n}{2}\mright)\Gamma\mleft(\frac{1}{2}\mright)} 
        =\pi \cdot \frac{\prod_{i=1}^{k-1}\mleft(\frac{n-1}{2}+i\mright)}{(k-1)!} \cdot \frac{1}{B\mleft(\frac{1}{2},\frac{n}{2}\mright)},
    \end{align*}
    where $B(x,y)=\frac{\Gamma(x)\Gamma(y)}{\Gamma(x+y)}$ is the Beta function. Using the asymptotic $B\left(\frac{1}{2},\frac{n}{2}\right)\sim \sqrt{\frac{2\pi}{n}}$ (see \cite[Equation 43:9:3]{atlas}) and the identity $2^{k-1}(k-1)!=(2k-2)!!=(m-2)!!$, we have
    \begin{align}\label{evenasymptotic}
        \mathbb{E}(\#\{\Sigma_{m,n}\cap L\})=\sqrt{\frac{\pi}{2}}\frac{1}{(m-2)!!}n^{\frac{m-1}{2}}\mleft(1+\mathcal{O}\mleft(\frac{1}{n}\mright)\mright) \quad \text{for} \ m \ \text{even}.
    \end{align}
    \end{proof}

We can now compare the results in \cref{lemma:expectation} with \cref{main1_intro}. When we have $\ell=(m-1)(n-1)+1$, \cref{main1_intro} states that for a random $m\times n \times \ell$ tensor having rank $\ell$ is, with probability one, equivalent to $\#\{\Sigma_{m,n}\cap L\}\geq \ell$. While $\ell$ grows linearly in $n$ for $m$ fixed, the expectation in \eqref{expectation} grows as $n^{\frac{m-1}{2}}$. 

For $m=3$, both quantities grow linearly, but while $\ell$ is given by $2n-1$, the expectation is $n$. Qualitatively, this means that the number of linearly independent points required to be real in order to have rank $\ell=2n-1$ grows faster, albeit with the same order, compared to the expected value of such points. This suggests that the probability that the rank is $2n-1$ should go to $0$ as $n\to +\infty$. 

For $m\geq 4$, while $\ell$ grows linearly, the expected value of real points grows as a higher power of $n$. Qualitatively, this tells us that as $n$ increases, the number of points we require to be real becomes smaller and smaller relative to the expected value. Based on this, we expect that the probability of having rank $\ell$ approaches $1$ as $n \to +\infty$.

\bibliographystyle{alpha}
\bibliography{math}

\end{document}